\documentclass{elsarticle}

\usepackage[english]{babel}
\usepackage{amssymb,amsmath,amsthm}

\usepackage{hyperref}

\usepackage{booktabs}

\usepackage{thm-restate}
\usepackage{thmtools}
\declaretheorem[name=Theorem, numberwithin=section]{thm}

\usepackage{tikz}
\usetikzlibrary{calc}

\newcommand{\ml}{\mathrm{ml}}

\newtheorem{theorem}{Theorem}[section]
\newtheorem{definition}[theorem]{Definition}
\newtheorem{claim}[theorem]{Claim}
\newtheorem{lemma}[theorem]{Lemma}
\newtheorem{corollary}[theorem]{Corollary}
\newtheorem{conjecture}[theorem]{Conjecture}
\newtheorem{remark}[theorem]{Remark}

\begin{document}

\begin{frontmatter}
\title{On the minimum leaf number of cubic graphs}
\author[ug]{Jan Goedgebeur\fnref{fn2}}
\ead{jan.goedgebeur@ugent.be}
\author[ynu]{Kenta Ozeki\fnref{fn1}}
\ead{ozeki-kenta-xr@ynu.ac.jp}
\author[ug]{Nico Van Cleemput}
\ead{nico.vancleemput@gmail.com}
\author[bute]{G\'abor Wiener\fnref{fn3}}
\ead{wiener@cs.bme.hu}

\fntext[fn2]{Supported by a Postdoctoral Fellowship of the Research Foundation Flanders (FWO).}
\fntext[fn1]{Partially supported by JST ERATO Grant Number JPMJER1201, Japan, and JSPS KAKENHI Grant Number 18K03391.}
\fntext[fn3]{Research was supported by grants 108947 and 124171 of the National Research, Development and Innovation Office --  NKFIH.}

\address[ug]{Department of Applied Mathematics, Computer Science and Statistics, Ghent University, Krijgslaan 281-S9, 9000 Ghent, Belgium.}
\address[ynu]{Faculty of Environment and Information Sciences, Yokohama National University, 79-2 Tokiwadai, Hodogaya-ku, Yokohama 240-8501, Japan.}
\address[bute]{Department of Computer Science and Information Theory, Budapest University of Technology and Economics, Hungary.}

\begin{abstract}
The \emph{minimum leaf number} $\hbox{ml} (G)$ of a connected graph $G$ is defined as the minimum number of leaves of the spanning trees of $G$. We present new results concerning the minimum leaf number of cubic graphs: we show that if $G$ is a connected cubic graph of order $n$, then $\ml(G) \leq \frac{n}6 + \frac13$, improving on the best known result in~\cite{sala} and proving the conjecture in~\cite{ZY}. We further  prove that if $G$ is also 2-connected, then $\ml(G) \leq \frac{n}{6.53}$, improving on the best known bound in~\cite{BSSS}. We also present new conjectures concerning the minimum leaf number of several types of cubic graphs and examples showing that the bounds of the  conjectures are best possible. 
\end{abstract}

\begin{keyword}
spanning tree \sep cubic \sep minimum leaf number \sep non-traceable

\MSC[2010] 05C05 \sep 05C45 \sep 05C38
\end{keyword}

\end{frontmatter}

\section{Preliminaries}  

All graphs considered in this paper are finite, simple, and connected, unless stated otherwise. For a graph $G$, $V(G)$ and $E(G)$ denotes the set of vertices and the set of edges of $G$, respectively. 
The subgraph of $G$ induced by the vertex set $X\subseteq V(G)$ is denoted by $G[X]$.
A graph is said to be traceable if it has a hamiltonian path.     

\begin{definition}
The minimum leaf number of a connected graph $G$, denoted by $\hbox{ml} (G)$ is the minimum number of leaves of the spanning trees of $G$. 
\end{definition}

\begin{definition}
The path covering number of a graph $G$, denoted by $\mu (G)$ is the minimum number of vertex disjoint paths that cover the vertices of $G$. 
\end{definition}

\begin{claim} \label{all1}
$\mu (G) +1 \leq \ml (G) \leq 2\mu (G) $.
\end{claim}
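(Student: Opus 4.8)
The plan is to establish the two inequalities separately, translating between the two optimisation problems in opposite directions.

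For the upper bound $\ml(G) \le 2\mu(G)$, I would start from a minimum path cover $\{P_1,\dots,P_k\}$ with $k=\mu(G)$ and view its union $F=\bigcup_i P_i$ as a spanning linear forest of $G$: every vertex has degree at most $2$ in $F$, and $F$ has exactly $k$ components. Since $G$ is connected, one can select $k-1$ edges of $G$ whose endpoints lie in distinct components (for instance by taking a spanning tree of the graph obtained from $G$ by contracting each $P_i$) and add them to $F$ to obtain a spanning tree $T$. The key observation is that adding edges only raises degrees, so no vertex internal to some $P_i$ (hence of $F$-degree $2$) can become a leaf of $T$; every leaf of $T$ is therefore an endpoint of one of the paths $P_i$. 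As each path contributes at most two endpoints, $T$ has at most $2k=2\mu(G)$ leaves, giving $\ml(G)\le 2\mu(G)$.

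For the lower bound $\mu(G)+1 \le \ml(G)$, I would run the argument in reverse: fix a spanning tree $T$ of $G$ realising $\ell := \ml(G)$ leaves, and note that any vertex-disjoint path cover using only edges of $T$ is also one of $G$, so it suffices to prove that a tree with $\ell$ leaves has a path cover of size at most $\ell-1$. Since a vertex-disjoint path cover of $T$ corresponds exactly to a spanning linear forest $F\subseteq T$, and such a forest on $n$ vertices with $|E(F)|$ edges has $n-|E(F)|$ components, the statement $\mu(T)\le \ell-1$ is equivalent to saying that $T$ can be turned into a graph of maximum degree at most $2$ by deleting at most $\ell-2$ edges. To bound the number of deletions I would use the degree identity for trees: writing $n_i$ for the number of vertices of degree $i$, the relations $\sum_i n_i=n$ and $\sum_i i\,n_i = 2(n-1)$ combine to $\sum_{i\ge 3}(i-2)n_i = n_1-2 = \ell-2$, so the total ``excess degree'' $\sum_v \max(\deg(v)-2,\,0)$ equals exactly $\ell-2$. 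I would then delete edges greedily: as long as some vertex $v$ has degree at least $3$, delete any edge incident to $v$. Such a deletion lowers the excess degree by at least $1$ (the endpoint $v$ loses one unit of excess, while the other endpoint never gains any), so after at most $\ell-2$ deletions every degree is at most $2$. The resulting linear forest has at least $(n-1)-(\ell-2)=n-\ell+1$ edges and hence at most $\ell-1$ components, yielding $\mu(G)\le\mu(T)\le \ell-1$.

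The routine steps are the handshake computation and the construction of the connecting edges; the crux — and the step I would be most careful about — is the greedy deletion argument, specifically verifying that each deletion decreases the excess degree by at least one so that $\ell-2$ deletions always suffice, and treating the degenerate small cases (e.g.\ $n\le 2$) so that $\ell\ge 2$ throughout.
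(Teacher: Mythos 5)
Your proposal is correct. The upper bound is essentially the paper's argument: take a minimum path cover, view it as a spanning linear forest, extend to a spanning tree, and observe that every leaf of the tree must be an endpoint of one of the paths since degrees only increase. For the lower bound, however, you take a genuinely different route. The paper proves the key fact (a spanning tree with $\ell$ leaves yields a path cover by at most $\ell-1$ paths) by induction: it locates a leaf $a$, the branch vertex $b$ nearest to $a$, and strips off the path from $a$ to the neighbour of $b$, leaving a tree with exactly $\ell-1$ leaves, so one path is peeled off per induction step. You instead argue globally and non-inductively: the handshake identity for a tree gives $\sum_{i\ge 3}(i-2)n_i = n_1-2 = \ell-2$, i.e.\ the total excess degree is exactly $\ell-2$, and a greedy deletion of edges at vertices of degree at least $3$ reduces this excess by at least one per step, so at most $\ell-2$ deletions produce a spanning linear forest of $T$ with at most $\ell-1$ components. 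Both arguments are sound; the paper's induction is more constructive (it exhibits the path decomposition explicitly, one path at a time), while your counting argument makes the bookkeeping exact and transparent and would generalise with no extra work to bounding decompositions of forests or to other maximum-degree thresholds. Your attention to the one delicate point — that each deletion strictly decreases the excess because the second endpoint can never gain excess — is exactly where the rigour is needed, and your treatment of it is correct.
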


\begin{proof}
If $G$ has a spanning tree with $k\geq 3$ leaves, then we can disjointly cover the vertices of $G$ by a path and a tree with $k-1$ leaves, from which the first inequality immediately follows by induction. Indeed, let $T$ be the spanning tree with $k$ leaves, $a$ be one of the leaves, $b$ that branch (that is, vertex of degree 3) of $T$, which is the closest to $a$ in $T$, and finally $b'$ be the neighbour of $b$ along the unique $ba$ path in $T$. Then if we delete the vertices of the $ab'$ path from $T$, we obtain a tree with $k-1$ leaves (since $b$ is a branch). The second inequality is even easier to prove: $G$ has a path cover $S$ consisting of  $\mu (G)$ paths. $S$ is a forest, where the number of vertices of degree at most 1 (note that some paths of $S$ might be isolated vertices) is at most $2\mu (G)$. By extending the forest $S$ to a spanning tree arbitrarily, we obtain a spanning tree with at most $2\mu (G)$ leaves.  
\end{proof}

Both inequalities of Claim~\ref{all1} are tight, even for 2-connected cubic graphs, as we shall see later. In Section~\ref{sect:proof_main_results} we shall also see that for 2-connected cubic graphs on $n$ vertices  $\ml (G) = 2\mu (G) $ is possible only if $\mu (G) \leq \frac{n}{18} $.

\section{Introduction}  

Hamiltonian properties of cubic planar graphs have been studied extensively due to Tait's attempt to prove the four color conjecture based on the conjecture that all 3-connected cubic planar graphs are hamiltonian. Tait's conjecture was disproved by Tutte~\cite{Tu} in 1946, 
his counterexample is of order 46; the smallest counterexample (of order 38) was found in 1966 by Barnette, Bos\'ak, and Lederberg, independently~\cite{Bo,Le}, see also~\cite{Gb}. In 1986, Holton and McKay~\cite{HM} proved that there exists no non-hamiltonian $3$-connected cubic planar graph on fewer than 38 vertices. In the late sixties Barnette conjectured two weakened versions of Tait's conjecture, of which the more famous one stating that all 3-connected bipartite planar cubic graphs are hamiltonian~\cite{Ba} is still open. The other one 
(conjectured also by Goodey~\cite{Go}) stating that all 3-connected cubic plane graphs with faces of size at most 6 are hamiltonian has been proved recently by Kardo\v{s}~\cite{Kardos}. In 1982, Asano, Exoo, Harary, and Saito~\cite{AEHS} showed that the (unique) smallest 2-connected cubic bipartite planar non-hamiltonian graph has order 26.

Hamiltonian properties of cubic, but not necessarily planar graphs have also been investigated, though not so extensively.
In 1971 Tutte conjectured that all 3-connected bipartite cubic graphs are hamiltonian, which is a (much)
stronger version of the still open Barnette-conjecture. Actually, Tutte conjectured even more: that the hamiltonian cycles of such a graph $G$ span the cycle space of the graph (and could prove that the 2-factors indeed span the cycle space)~\cite{Tu3}. This conjecture was disproved by Horton (see \cite[p. 240.]{BM}).

Exoo and Harary (continuing the work in~\cite{AEHS}) described the smallest examples of several non-hamiltonian graphs 
and multigraphs~\cite{EH82}, both in the planar and the nonplanar case.  

The most natural optimization type generalizations of hamiltonicity and traceability are the path covering number and the minimum leaf number. The path covering number of cubic graphs is studied in the famous paper of Reed~\cite{Re}, who showed that if $G$ is a connected cubic graph  of order $n$, then $\mu(G) \leq \lceil \frac{n}9 \rceil$ ~\cite{Re}. Reed also conjectured 
that for 2-connected cubic graphs $\mu(G) \leq \frac{n}{10}$. This has been recently confirmed by Yu~\cite{Yu}. It is worth mentioning that 
for not necessarily connected graphs the bound is much weaker: Magnant and Martin~\cite{MM} conjectured that for $k$-regular, not necessarily connected graphs $G$, $\mu (G) \leq \frac{n}{k+1}$ and proved the conjecture for $k\leq 5$ (while by Dirac's well-known theorem~\cite{Di} it is also true for $k\geq \frac{n-1}2$).

Somewhat less is known about the minimum leaf number. Zoeram and Yaqubi~\cite{ZY} conjectured in 2017 that if $G$ is a cubic graph  of order $n$, then $\ml (G) 
\leq \frac{n}6 + \frac13$, and gave an example that this bound (if true) is sharp. Actually this conjecture was almost proved in a much earlier paper of Salamon and Wiener~\cite{sala}, who showed that a refined version of depth first search finds a spanning tree of a connected cubic graph with at most $\frac{n}6 + \frac43$  leaves. In Section~\ref{res} we improve this result and thereby prove the Zoeram-Yaqubi conjecture.  The example given by Zoeram and Yaqubi has connectivity 1, so better bounds can still be possible for graphs with a higher connectivity.

Boyd, Sitters, van der Ster, and Stougie~\cite{BSSS} -- based on a result of 
M\"omke and Svensson~\cite{MS} -- showed that if $G$ is a 2-connected cubic multigraph of order $n$, then $\ml (G) \leq \frac{n}6 + \frac23$. 
(For cubic multigraphs, without the 2-connectivity only $\ml (G) \leq \frac{n}4 + \frac12$ is true. The proof -- based on the fact that the leaves of an optimum spanning tree must form an independent set -- is straightforward, and examples showing that the bound is tight are easy to find).

Finally let us mention some results concerning spanning trees with a high number of leaves: Storer~\cite{St} showed in 1981 that all cubic graphs  of order $n$ have a spanning tree with at least $\frac{n}4 +2$ leaves (the bound is tight). Griggs, Kleitman, and Shastri~\cite{GKS} proved that all 3-connected cubic graphs  of order $n$ have a spanning tree with at least $\frac{n}3 +\frac43$ leaves (this bound is also tight) and they gave a rigorous proof of Storer's theorem.  Kleitman and West~\cite{KW} generalized Storer's result to graphs with minimum degree 3.

\section{Results and conjectures} \label{res}

\subsection{General case}

The main result of the paper is the following.


\begin{restatable}{thm}{mainthma}\label{fo}
If $G$ is a 2-connnected cubic graph of order $n$ then $\ml (G) \leq \frac{13}{85} n \approx \frac{n}{6.538}$.
\end{restatable}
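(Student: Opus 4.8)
The plan is to refine the easy bound $\ml(G)\le 2\mu(G)$ of Claim~\ref{all1} and then feed in Yu's theorem $\mu(G)\le n/10$ for $2$-connected cubic graphs. Concretely, I would start from a minimum path cover $\mathcal{P}=\{P_1,\dots,P_\mu\}$ of $G$ and build a spanning tree by adding exactly $\mu-1$ of the remaining (``external'') edges to stitch the paths together. Since every vertex of $G$ has degree $3$, each endpoint of a nontrivial path is incident to two external edges and each internal path-vertex to exactly one, so the total number of external edges is $n/2+\mu$. The leaves of the resulting tree are precisely the path-endpoints to which no chosen external edge is attached (with a small correction for trivial one-vertex paths, which need two incident chosen edges to avoid being leaves). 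Thus the leaf count is $2\mu$ minus twice the number of endpoint--endpoint joins minus the number of endpoint--internal joins, and the whole problem reduces to choosing $\mu-1$ external edges forming a spanning tree of the ``contracted'' multigraph $H$ (obtained by shrinking each $P_i$ to a node) while covering as many distinct path-endpoints as possible.

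The technical heart would be isolated as a lemma: one can always realise a spanning tree with at most $\tfrac{1}{17}(16\mu+n)$ leaves, equivalently saving at least $\tfrac{1}{17}(18\mu-n)$ leaves relative to the trivial count $2\mu$. I would prove this by a charging argument over the paths, assigning path $P_i$ on $\ell_i$ vertices a leaf-budget $(\ell_i+16)/17$, so that the budgets sum to $\tfrac{1}{17}(n+16\mu)$. Long paths carry a generous budget (comfortably $\ge 2$ once $\ell_i$ is large), while the danger comes from short paths, whose two endpoints lie close together and share neighbours. Here $2$-connectivity is essential: contracting the connected pieces $P_i$ keeps $H$ $2$-connected, hence bridgeless, with enough external edges to let me route the spanning tree of $H$ through the endpoints of the short paths rather than through interior vertices.

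Granting the lemma, the theorem follows by a two-regime split on $\mu=\mu(G)$. If $\mu\le n/18$ then already $\ml(G)\le 2\mu\le n/9\le \tfrac{13}{85}n$; if $\mu> n/18$ then $\ml(G)\le \tfrac{1}{17}(16\mu+n)$, which is increasing in $\mu$, so by Yu's bound $\mu\le n/10$ it is maximised at $\mu=n/10$, giving $\ml(G)\le \tfrac{1}{17}\bigl(16\cdot\tfrac{n}{10}+n\bigr)=\tfrac{13}{85}n$. This split is exactly the source of the promised remark that $\ml(G)=2\mu(G)$ can occur only when $\mu(G)\le n/18$.

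I expect the main obstacle to be the savings lemma, and within it the simultaneous handling of three competing requirements: keeping the $\mu-1$ chosen external edges acyclic and spanning in $H$, maximising the number of distinct endpoints they cover, and coping with the degenerate short paths (isolated vertices, $K_2$'s, $P_3$'s) whose endpoints have overlapping neighbourhoods, so that a single external edge cannot always save two independent leaves. Converting the local endpoint-coverage count into the precise global constant $\tfrac{1}{17}$ will require a careful case analysis or discharging concentrated on these short paths, and this is where essentially all the work, and the genuine use of $2$-connectivity, will lie.
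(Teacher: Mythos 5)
Your overall skeleton is the same as the paper's: split the paths of a minimum vdp cover into short ($\le 17$ vertices) and long ($\ge 18$ vertices), argue that each short path costs only one leaf and each long path two, and then combine $s+\ell\le n/10$ (Yu) with $s+18\ell\le n$ via the weights $\tfrac{16}{17}$ and $\tfrac1{17}$ to get $\tfrac{13}{85}n$; your arithmetic and the two-regime split are fine (indeed the split is redundant, since $\tfrac1{17}(16\mu+n)$ with $\mu\le n/10$ already gives the bound). But the core ``savings lemma'' is exactly where your proposed route breaks down. You fix the paths of the cover and reduce the problem to choosing a spanning tree of the contracted multigraph $H$ that ``covers'' the endpoints of the short paths. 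This reduction is false: in a cubic graph the two extra edges at an endpoint of a short path $P$ may both be chords of $P$ (i.e., go to other vertices of $V(P)$), so that \emph{no} external edge of $G$ leaving $V(P)$ is incident to an endpoint of $P$. In that situation no choice of spanning tree of $H$ can cover those endpoints, and $2$-connectivity of the contracted graph does not help, because the obstruction is local to $V(P)$, not a connectivity deficiency of $H$.

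The missing idea is that the path itself must be \emph{re-routed} inside its own vertex set: one needs a hamiltonian path $P'$ of $G[V(P)]$ that starts at a vertex having a neighbour outside $V(P)$ (equivalently, at a degree-$2$ vertex of $G[V(P)]$). The paper establishes precisely this via a computer-verified lemma: every traceable graph on at most $17$ vertices with all degrees $2$ or $3$, at least two degree-$2$ vertices, and the property that every component of every vertex-deleted cut contains a degree-$2$ vertex, has a hamiltonian path starting at a degree-$2$ vertex ($2$-connectivity of $G$ is what guarantees these hypotheses for $G[V(P)]$). This lemma is sharp --- it fails at $18$ vertices --- which is the real reason the threshold $18$ appears; in your charging scheme the constant $18$ is arbitrary, and nothing in a discharging argument over $H$ can substitute for this statement. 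A second gap remains even after re-routing: two short paths might only be joinable to each other in a conflicting way. The paper resolves this by refining the cover (take a minimum cover maximizing $\sum_{P\in S}|P|^2$, so that a path can be joined at an endvertex only to a \emph{strictly longer} path, and joins can be performed in increasing order of length); your plan, which works with an arbitrary minimum cover, has no mechanism to rule out such conflicts.
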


As a consequence of Theorem~\ref{fo} we also prove the conjecture of Zoeram and Yaqubi~\cite{ZY}, that is


\begin{restatable}{thm}{mainthmb}\label{al}
If $G$ is a connected cubic graph  of order $n$ then $\ml (G) \leq  \frac{n}{6} + \frac{1}{3}$.
\end{restatable}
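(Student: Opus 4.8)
The plan is to deduce the connected case from the $2$-connected Theorem~\ref{fo} by exploiting the bridge structure that cubic graphs are forced to have. First I would record a structural fact special to cubic graphs: since a cut vertex has degree $3$, its three edges can be distributed among blocks only as one $2$-connected block plus one bridge, or as three bridges. Hence every cut vertex is incident to a bridge, and distinct $2$-connected blocks are vertex-disjoint. Consequently, deleting all bridges of $G$ splits it into components $C_1,\dots,C_r$, each of which is either a single vertex (where three bridges meet) or a single $2$-connected block in which all vertices have degree $3$ except the bridge endpoints, which have degree $2$; these components are arranged in a tree by the bridges.

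The payoff of this decomposition is an exact formula for $\ml(G)$. Every bridge lies in every spanning tree, so each spanning tree of $G$ is the union of all bridges with a spanning tree $T_i$ of each $C_i$, and a vertex is a leaf of the result exactly when it is a leaf of its $T_i$ and is incident to no bridge. Since the choices of the $T_i$ are independent, this gives $\ml(G)=\sum_i g(C_i)$, where $g(C_i)$ is the minimum over spanning trees $T_i$ of $C_i$ of the number of $T_i$-leaves that are \emph{not} bridge endpoints. The key point is that a leaf landing on a degree-$2$ (bridge-endpoint) vertex costs nothing.

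From here there are two regimes. If $G$ is already $2$-connected there are no bridges, so Theorem~\ref{fo} gives $\ml(G)\le \tfrac{13}{85}n \le \tfrac n6 \le \tfrac n6+\tfrac13$ with room to spare. Otherwise each nontrivial $C_i$ carries a nonempty set $D_i$ of degree-$2$ vertices, and it suffices to prove a local bound $g(C_i)\le \tfrac{|C_i|}6$ (single vertices contribute $0$), whence $\sum_i g(C_i)\le \tfrac n6$. To obtain the local bound I would complete $C_i$ to a $2$-connected cubic simple graph $\widehat C_i$ by a small local surgery touching only $D_i$ — for a lone deficient vertex $c$ this means subdividing a suitable edge and joining the new degree-$2$ vertex to $c$, which supplies the missing degree without creating a bridge — then apply Theorem~\ref{fo} to get a spanning tree of $\widehat C_i$ with at most $\tfrac{13}{85}|\widehat C_i|$ leaves, and restrict it back to a spanning tree of $C_i$. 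Any leaf produced by undoing the surgery sits on a vertex of $D_i$ and so does not count toward $g(C_i)$; since $\tfrac{13}{85}$ is strictly below $\tfrac16$, the slack absorbs both the surgery overhead (one extra vertex) and the cost of reconnection, at least once $|C_i|$ is large enough.

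The hard part is pinning the additive constant down to exactly $\tfrac13$ rather than something larger. This is where the leaf components of the bridge tree (those with $|D_i|=1$) and the smallest blocks must be handled delicately: with only one free leaf slot the clean bound $g(C_i)\le |C_i|/6$ can fail by the parity term $\tfrac13$, so one must show that this loss is incurred globally at most once. I expect the two main technical obstacles to be (i) verifying that the completion stays $2$-connected and simple while altering only the degree-$2$ vertices, and (ii) a finite case analysis on the small components, together with the parity of $|C_i|$, showing the total excess over $\tfrac n6$ never exceeds $\tfrac13$ — which is exactly the value realised by the sharp connectivity-$1$ example of Zoeram and Yaqubi.
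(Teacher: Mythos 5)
Your bridge decomposition and the identity $\ml(G)=\sum_i g(C_i)$ are correct (in a cubic graph every cut vertex is incident to a bridge, blocks are vertex-disjoint, every spanning tree contains all bridges and restricts to a spanning tree of each $C_i$, and a leaf of $T$ is exactly a leaf of some $T_i$ at a vertex incident to no bridge). The gap is in the step you yourself flag as the reduction's payoff: the local bound $g(C_i)\le |C_i|/6$ is simply \emph{false}, and not just for one exceptional component. Take $C_i$ to be $K_4$ with one edge subdivided: it has $5$ vertices, a single degree-$2$ vertex $x$, and $g(C_i)=1>5/6$, because every spanning tree has at least two leaves and at most one of them can be $x$. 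The extremal graphs for Theorem~\ref{al} are built from many copies of exactly this component: take a tree whose internal vertices have degree $3$ and whose $k$ leaves are replaced by this gadget; then $n=6k-2$ and $\ml(G)=k=\frac n6+\frac13$. Here \emph{every} one of the $k$ gadget components exceeds your local bound by $\frac16$, and the excess is compensated by the $k-2$ single-vertex components (each contributing $0$ against a budget of $\frac16$). So the loss is not ``incurred globally at most once''; the correct accounting must run over the bridge tree, using $\sum_i d_i = 2(r-1)$ where $d_i=|D_i|$ and $r$ is the number of components, i.e.\ you would need a bound of the shape $g(C_i)\le \frac{|C_i|+2-d_i}{6}$ --- a genuinely different and harder statement, which a black-box application of Theorem~\ref{fo} does not deliver.

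The surgery step has a second, independent problem. Your claim that ``any leaf produced by undoing the surgery sits on a vertex of $D_i$'' fails for the subdivide-and-join gadget: if the spanning tree of $\widehat C_i$ contains $uw$ and $wc$ but not $wv$ (where $w$ subdivides $uv$), deleting $w$ can drop the degree of $u$ to one, and the reconnecting edge need not be incident to $u$, so a leaf appears at $u\notin D_i$; similarly at $v$. Thus the surgery only yields $g(C_i)\le \frac{13}{85}(|C_i|+1)+2$, and the slack $\frac16-\frac{13}{85}=\frac{7}{510}$ per vertex absorbs this additive loss only once $|C_i|$ is on the order of $150$. Below that threshold your ``finite case analysis'' would range over all $2$-connected graphs with degree-$2$ and degree-$3$ vertices on up to roughly $150$ vertices, which is hopeless (the paper's own computer checks in Lemma~\ref{short} stop at $17$ vertices) --- and the components that actually matter in the extremal examples have $5$ vertices, squarely in this bad range. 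For contrast, the paper sidesteps all of this: it handles connectivity $1$ directly by re-analysing the Refined DFS of Salamon and Wiener started at a cut vertex, which gives at most $\frac n6+\frac13$ leaves in one stroke, and invokes Theorem~\ref{fo} only when $G$ is $2$-connected.
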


The proofs of these results can be found in the next section. In what follows we formulate some conjectures concerning the minimum leaf number of certain classes of cubic graphs. We also show that these conjectures (if true) are best possible. 

\begin{conjecture} \label{sej1}
If $G$ is a 2-connnected cubic graph  of order $n$ then $\ml (G) \leq \lceil \frac{n}{10} \rceil$. 
\end{conjecture}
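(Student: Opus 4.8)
Since this statement is posed as a conjecture, I can only outline the route I would attempt rather than a complete argument. The starting point is the interplay between $\ml$ and $\mu$ recorded in Claim~\ref{all1}, together with Yu's confirmation~\cite{Yu} of Reed's~\cite{Re} conjecture that $\mu(G)\le \frac{n}{10}$ for $2$-connected cubic $G$. Naively combining $\ml(G)\le 2\mu(G)$ with $\mu(G)\le \frac{n}{10}$ only yields $\ml(G)\le \frac{n}{5}$, so the whole difficulty is to interpolate between two regimes: when $\mu(G)$ is small the bound $2\mu(G)$ is already below $\frac{n}{10}$, whereas when $\mu(G)$ is close to $\frac{n}{10}$ one must show that $\ml(G)$ is in fact much closer to $\mu(G)$ than to $2\mu(G)$. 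The remark following Claim~\ref{all1}, that $\ml(G)=2\mu(G)$ forces $\mu(G)\le \frac{n}{18}$, is exactly a weak instance of this phenomenon, and the plan is to sharpen such a tradeoff to its extremal form.

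Concretely, I would aim to prove a single tradeoff inequality of the shape
\[
\ml(G)\ \le\ a\,\mu(G) + \tfrac{1-a}{10}\,n
\]
for some constant $a\in[0,1]$ (for instance $a=\tfrac12$, giving $\ml(G)\le \tfrac12\mu(G)+\tfrac{n}{20}$). Any such inequality, combined with $\mu(G)\le \frac{n}{10}$, collapses to $\ml(G)\le \frac{n}{10}$, after which a separate rounding analysis on the extremal configurations would upgrade $\frac{n}{10}$ to $\lceil \frac{n}{10}\rceil$. The content of the tradeoff is a refined joining argument: starting from a minimum path cover $\{P_1,\dots,P_{\mu}\}$ and contracting each $P_i$ to a vertex, one obtains an auxiliary multigraph $H$ on $\mu$ nodes whose edges are the edges of $G$ not used by the cover; every spanning tree of $H$ prescribes a way of gluing the paths into a spanning tree of $G$, and a path endpoint survives as a leaf precisely when no gluing edge is incident to it. Since in a cubic graph every path endpoint carries two free edges, $H$ is edge-dense, and the task becomes to select the $\mu-1$ gluing edges so that they meet as many distinct path endpoints as possible.

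I would formalise the joining step via a minimal counterexample together with a discharging argument, in the spirit of the proof of Theorem~\ref{fo}. After reducing to a $3$-connected (indeed cyclically $4$-edge-connected) minimal counterexample and ruling out small reducible configurations, charge would be distributed among the leaves, the branch vertices, and the internal path segments of an optimal spanning tree, with the reducible configurations chosen so that their absence forces enough gluing edges to land on path endpoints. The discharging would have to be calibrated to be tight on the extremal family witnessing $\ml(G)=\lceil\frac{n}{10}\rceil$, which in turn dictates the precise constant $a$.

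The hardest part, and the reason the conjecture is still open, is that the passage from the current best constant $\frac{13}{85}\approx 0.153$ to $\frac{1}{10}=0.1$ leaves essentially no slack: the argument must simultaneously match Yu's extremal path covers and the leaf-optimal joinings on the \emph{same} graphs, so purely local reducible configurations are unlikely to suffice, and some genuinely global control of how a minimum path cover interacts with the free edges of $G$ will be required.
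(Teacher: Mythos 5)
First, a point of status: this statement is Conjecture~\ref{sej1} of the paper --- it is open, and the paper offers no proof of it, only a construction showing the bound would be sharp (a cycle of $k$ edge-deleted Petersen graphs, giving $n=10k$ and $\ml(G)=k$) together with computer verification up to 32 vertices. Your decision to present a strategy rather than a proof is therefore the correct assessment of the statement; but since your text proves nothing, it cannot stand as a solution, and what can be evaluated is whether the proposed route could work.

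It cannot, in the form you give. The central tradeoff inequality
\[
\ml(G)\ \le\ a\,\mu(G) + \tfrac{1-a}{10}\,n, \qquad a\in(0,1],
\]
is false, and it is refuted by the paper's own sharpness example. For the cycle of $k$ edge-deleted Petersen graphs one has $n=10k$, $\mu(G)=\frac{n}{20}$ (see the first Remark after Conjecture~\ref{sej1}) and $\ml(G)=\frac{n}{10}=2\mu(G)$. Substituting $\mu=\frac{n}{20}$ makes your right-hand side equal to $\frac{(2-a)n}{20}$, which is strictly smaller than $\ml(G)=\frac{2n}{20}$ for every $a>0$; only $a=0$, i.e.\ the conjecture itself, survives, so the interpolation scheme is circular. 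More generally, any bound linear in $\mu$ that accommodates this example (where $\mu=n/20$ yet $\ml=n/10$) and still yields $\ml\le n/10$ at $\mu=n/10$ must have a \emph{non-positive} coefficient on $\mu$: your intuition that $\ml$ must approach $\mu$ as $\mu$ grows is qualitatively right (cf.\ Theorem~\ref{utso}, which shows $\ml=2\mu$ forces $\mu\le \frac{n}{18}$), but the convex-combination form you chose points the tradeoff in exactly the wrong direction. A second, smaller gap: the auxiliary multigraph $H$ obtained by contracting the paths of a minimum cover need not be ``edge-dense'', because the two free edges at a path endpoint may be chords into the same path --- which is precisely what happens in the Petersen-based examples; handling this re-routing problem is what Lemma~\ref{short} is for in the paper's proof of the weaker bound $\frac{13}{85}n$ (Theorem~\ref{fo}), which remains the best result actually proven.
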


This conjecture might seem far-fetched, given that Reed~\cite{Re} claims that there exist 2-connnected cubic graphs $G$ with $\mu (G) \geq \lceil \frac{n}{10} \rceil$ and by Claim~\ref{all1} $\ml (G) \geq \mu (G) +1 $ always holds. However, Reed's example only gives graphs with $\mu (G) \geq \lceil \frac{n}{20} \rceil$, as pointed out by Yu~\cite{Yu}. Yu also gives 2-connnected cubic graphs $G$ with $\mu (G) \geq \lceil \frac{n}{14} \rceil$ in Theorem 1.2 of~\cite{Yu} and this is the best known bound to date (later we prove that Yu's graphs have path covering number \emph{exactly} $\frac{n}{14}$). 

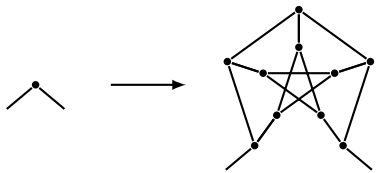
\begin{figure}
\begin{center}
\begin{tikzpicture}[thick,every node/.style={fill, circle, inner sep=1pt}]
\begin{scope}
\node (a) at (0,0) {};
\draw (a) -- ++(220:.5) (a) -- ++(320:.5);
\end{scope}
\begin{scope}[shift={(3.5,0)}]
\foreach \x/\angle in {1/306,2/18,3/90,4/162,5/234} {
	\node (outer\x) at ($(0,0) + (\angle:1)$) {};
	\node (inner\x) at ($(0,0) + (\angle:.5)$) {};
}
\def\prevX{1}
\draw \foreach \x[remember=\x as \prevX] in {2,...,5} {(outer\prevX) -- (outer\x) -- (inner\x)};
\def\prevX{4}
\draw \foreach \x[remember=\x as \prevX] in {1,3,5,2,4} {(inner\prevX) -- (inner\x) -- (outer\x)};
\draw (outer5) -- ++(220:.5) (outer1) -- ++(320:.5);
\end{scope}

\draw[-latex] (1,0) -- (2,0);
\end{tikzpicture}
\end{center}
\caption{An operation to construct a non-traceable 2-connected cubic graph from a cycle.}\label{fig:gadget_general_2conn}
\end{figure}

In order to prove that Conjecture~\ref{sej1} is sharp, let $G$ be the graph obtained from a cycle of length $k$ by substituting all vertices of the cycle by an edge-deleted Petersen graph $P'$ (that is, $P'$ is a graph obtained from the Petersen graph by deleting an edge), such that the edges of the cycle are connected to the vertices that are incident with the deleted edge of the respective copy of $P'$ (see Figure~\ref{fig:gadget_general_2conn}). $G$ is easily seen to be cubic and of connectivity 2. Let $T$ be an arbitrary spanning tree of $G$, $X$ be the vertex set of one of the 
copies of $P'$ and $a,b\in X$ the degree 2 vertices of $G[X]$. We claim that $T[X]$ has a vertex $v$ of degree 1, such that $v\neq a$, $v\neq b$. This clearly implies $\ml (G) \geq \lceil \frac{|V(G)|}{10} \rceil$, since $v$ has the same degree in $T$ and $T[X]$, hence there must be $k$ vertices of degree 1 in $T$, while the order of $G$ is $10k$. The proof of the claim is not difficult: $T[X]$ is either a tree or a forest with two components, such that $a$ and $b$ are in different components. If $T[X]$ is a path, one of the endvertices must be different from $a$ and $b$, otherwise there would be a hamiltonian cycle in the Petersen graph. If $T[X]$ is a tree, but not a path, then the claim is obvious. Finally, if $T[X]$ consists of two trees, at least one of these trees must contain a leaf different from $a$ and $b$ (since $a$ and $b$ are in different trees and at least one of the trees must contain at least one edge). 

\begin{remark} 
It is not difficult to verify that for the graphs $G$ constructed above we have $\mu (G) = \frac{n}{20}$, thus the bound $\ml (G) \leq 2 \mu (G)$ of Claim \ref{all1} is sharp, even for 2-connected cubic graphs. 
\end{remark} 

\begin{remark} 
It is worth mentioning that the ceiling sign in the conjecture is needed, since there exist non-traceable 2-connected (moreover, also 3-connected or 2-connected and planar) cubic graphs of order 28, as we shall see later. 
\end{remark} 

\begin{conjecture} \label{sej2}
If $G$ is a 3-connected cubic graph  of order $n$ then $\ml (G) \leq \lceil \frac{n}{16} + \frac12 \rceil$.
\end{conjecture}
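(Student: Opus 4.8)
The plan: since this is the strongest (and, being a conjecture, presumably the hardest) of the bounds, I would first record what the target forces. By Claim~\ref{all1} we have $\mu(G)+1 \le \ml(G)$, so the conjecture would in particular give $\mu(G) \le \lceil n/16 + \tfrac12 \rceil - 1$, a path-covering bound for $3$-connected cubic graphs far beyond Reed's $\lceil n/9\rceil$ and the $2$-connected results of~\cite{Re,Yu}; note that Yu's $n/14$ lower-bound graphs must then have connectivity exactly $2$ (they cannot be $3$-connected), or they would already refute the conjecture. This also tells me that a proof through the cheap estimate $\ml(G) \le 2\mu(G)$ of Claim~\ref{all1} is hopeless, since it would demand $\mu(G) \le n/32$; I must instead build the spanning tree directly and count leaves with an $\ml$-specific, not a $\mu$-specific, argument.

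My main line of attack would be to push the refined depth-first-search construction of Salamon and Wiener~\cite{sala}, together with the amortized counting behind Theorem~\ref{fo}, into the $3$-connected regime. In those arguments one grows a spanning tree by a carefully ordered DFS and charges each newly created leaf to a block of nearby vertices; the quality of the bound is governed by how few vertices a forced leaf can be charged to, and $2$-connectivity is precisely what rules out the bridge/pendant configurations available to a merely connected graph. For $3$-connected cubic graphs the absence of $2$-edge-cuts removes a further family of obstructions, and the aim is to design a discharging scheme in which every configuration that forces a leaf can be made to consume at least $16$ vertices. Concretely, I would (i)~enumerate the local configurations around a DFS-leaf that survive $3$-connectivity, (ii)~show each is \emph{reducible}, i.e.\ admits either a local rerouting of the tree or a contraction to a smaller $3$-connected cubic graph, and (iii)~calibrate the charges so that the induction closes with the constant $16$.

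The main obstacle I anticipate is twofold. First, $3$-connectivity is fragile under exactly the reductions one wants to use: suppressing a degree-two vertex or contracting an edge in a cubic graph routinely creates a $2$-edge-cut or a multiple edge, so every reduction step must carry its own proof that $3$-connectivity is preserved (or a detour through a broader class of multigraphs), and this is where such inductions typically break. Second, and more seriously, the constant $16$ leaves essentially no slack---it is more than twice as strong as the proven $n/6.538$ of Theorem~\ref{fo}---so the discharging must be almost exactly tight, which means the extremal gadgets have to be understood \emph{before} the charges can be fixed. I would therefore begin from the sharpness side, exhibiting the $3$-connected cubic graphs on multiples of $16$ vertices that attain $\lceil n/16 + \tfrac12\rceil$ (the $3$-connected analogue of the edge-deleted-Petersen construction used for Conjecture~\ref{sej1}), and read off from them precisely which local configurations the charging scheme must treat as worst cases. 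Without first pinning down those gadgets I expect any direct DFS-plus-discharging attempt to stall well short of the factor $16$, which is presumably why the statement is offered here as a conjecture rather than a theorem.
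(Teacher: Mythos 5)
Your proposal does not prove the statement, but neither does the paper: this is offered there as a conjecture, and the only thing the paper establishes about it is \emph{sharpness}. So the honest endpoint you reach (that any DFS-plus-discharging attack stalls and the statement stays open) is consistent with the paper; there is no hidden proof you failed to find. Your meta-observations are also sound: by Claim~\ref{all1} the conjecture would force $\mu(G)\le\lceil n/16+\frac12\rceil-1$ for $3$-connected cubic graphs, Yu's $n/14$ examples do have connectivity exactly $2$ (they contain the $2$-cuts created by the edge-deleted-$K_4$ gadgets), and the route through $\ml\le 2\mu$ is indeed hopeless.

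Where your plan diverges from what the paper actually supplies is the sharpness side, which you correctly identify as the necessary first step but do not carry out. The extremal family is \emph{not} the ``$3$-connected analogue of the edge-deleted-Petersen construction'' of Conjecture~\ref{sej1} (that gadget cycle only has connectivity $2$). Instead the paper takes the graphs $G_{2k+1}$ of~\cite{Wi1,Wi2}: one chains $2k+1$ copies of the $8$-vertex J-cell (the Petersen graph minus two adjacent vertices, with corner vertices $a,b,c,d$) by the edge pairs $(b_i,a_{i+1}),(c_i,d_{i+1})$, closing into a ring. The result is $3$-connected, cubic, of order $n=16k+8$, and the theorem $\ml(G_{2k+1})=k+1=\lceil n/16+\frac12\rceil$ is not a per-gadget leaf count like the Petersen argument for Conjecture~\ref{sej1}; it rests on the J-cell axioms (which pairs of corners are ``good'', in $H$ and in every $H-v$) and is imported from~\cite{Wi1,Wi2}. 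The paper also records a second, smaller-order family (vertex-deleted Petersen graphs planted in a $3$-connected cubic host) giving $\ml(G)\ge\frac{n}{18}+\frac{13}{9}$, which beats the J-cell family when $\ml(G)\le 8$; the smallest J-cell example has order $40$. So if you pursue your program, the ``worst-case local configurations'' your discharging must survive are the J-cell rings, and any charge calibration has to be tight against those, not against single Petersen-derived blocks.
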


To verify that this conjecture is also sharp, let us consider the 3-connected graphs $G_{k}$ appearing in the papers~\cite{Wi1,Wi2}. For the sake of completeness we describe these graphs here. First we need the following definitions.

\begin{definition}
A pair of vertices $(a,b)$ of a graph $G$ is said to be good if there exists a hamiltonian path of $G$ between them. A pair of pairs of vertices of $G$ $((a,b),(c,d))$ is said to be good if there exists a spanning subgraph of $G$ consisting of two vertex-disjoint paths, one between $a$ and $b$ and another one between $c$ and $d$. 
\end{definition}

\begin{definition} (Hsu, Lin \cite{HL})
The quintuple $(H,a,b,c,d)$ is a J-cell if $H$ is a graph and $a,b,c,d \in V(H)$, such that 
\begin{enumerate}
\item The pairs $(a,d)$, $(b,c)$ are good in $H$. 
\item None of the pairs $(a,b)$, $(c,d)$, $(a,c)$, $(b,d)$, $((a,b),(c,d))$, $((a,c),(b,d))$ are good in $H$.
\item For each $v\in V(H)$ there is a good pair in $H-v$ among  $(a,b)$, $(c,d)$, $(a,c)$, $(b,d)$, $((a,b),(c,d))$, $((a,c),(b,d))$. 
\end{enumerate}
\end{definition}

It is known that all J-cells can be obtained by deleting two adjacent vertices of degree 3 of some hypohamiltonian graph \cite{Wi2}, thus the smallest J-cell is obtained from the Petersen graph by deleting two adjacent vertices, see Figure~\ref{figJcell}. 

\begin{figure}[h!] 
\begin{center}
\begin{tikzpicture}[every node/.style={circle, fill, inner sep=1.5pt, thick},every edge/.style={draw, thick}]

\node (c) at (180:2) [label=left:$c$] {};
\node (a) at (90:2) [label=above:$a$] {};
\node (b) at (0:2) [label=right:$b$] {};
\node (d) at (270:2) [label=below:$d$] {};
\node (1) at (135:1)  {};
\node (2) at (45:1)  {};
\node (3) at (315:1)  {};
\node (4) at (225:1)  {};

\draw[thick] (c) -- (1) -- (a) -- (2) -- (b) -- (3) -- (d) -- (4) -- (c) (1) -- (3) (2) -- (4);
\end{tikzpicture}
\end{center}
\caption{The smallest J-cell (obtained from the Petersen graph)\label{figJcell}}
\end{figure}
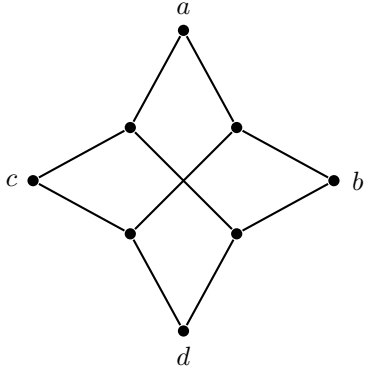
 
Now let $F_i=(H_i,a_i,b_i,c_i,d_i)$ be J-cells for $i=1,2,\ldots ,k$. and let us define the graphs $G_k$ as follows. $G_k$ consists of vertex-disjoint copies of the graphs $H_1, H_2, \ldots , H_k$, the edges $(b_i,a_{i+1}), (c_i,d_{i+1})$ for all $i=1,2,\ldots k-1$, and the edges $(b_k,a_{1}), (c_k,d_{1})$. Figure~\ref{figGk} shows the construction of $G_k$. 

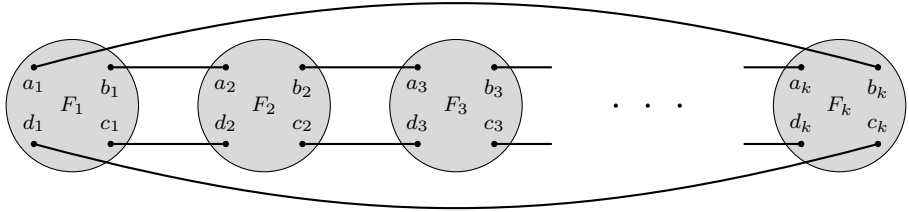
\begin{figure}[h!]
\begin{center}
\begin{tikzpicture}[ fo/.style={draw, circle, fill=black, minimum size={0.08cm}, inner sep=0cm, scale=0.88}, font=\footnotesize, scale=0.85]
\tikzstyle{b} = [draw, fill=black!15!white, minimum size={2cm}, circle, scale=0.88, font=\footnotesize, node distance=0.5cm]
\node[b] (1) at (0,0) [label=center:$F_1$] {};
\node [fo] (c) at (-0.6,-0.6) [label=above:${d}_1$] {};
\node [fo] (a) at (-0.6,0.6) [label=below:${a}_1$] {};
\node [fo] (d) at (0.6,-0.6) [label=above:${c}_1$] {};
\node [fo] (b) at (0.6,0.6) [label=below:${b}_1$] {};

\node[b] (2) at (3,0) [label=center:$F_2$] {};
\node [fo] (c2) at (3-0.6,-0.6) [label=above:${d}_2$] {};
\node [fo] (a2) at (3-0.6,0.6) [label=below:${a}_2$] {};
\node [fo] (d2) at (3+0.6,-0.6) [label=above:${c}_2$] {};
\node [fo] (b2) at (3+0.6,0.6) [label=below:${b}_2$] {};

\node[b] (3) at (6,0) [label=center:$F_3$] {};
\node [fo] (c3) at (6-0.6,-0.6) [label=above:${d}_3$] {};
\node [fo] (a3) at (6-0.6,0.6) [label=below:${a}_3$] {};
\node [fo] (d3) at (6.6,-0.6) [label=above:${c}_3$] {};
\node [fo] (b3) at (6.6,0.6) [label=below:${b}_3$] {};

\fill[black] (8.5,0) circle (0.033cm) ; \fill[black] (9,0) circle (0.033cm) ; \fill[black] (9.5,0) circle (0.033cm) ;

\node[b] (4) at (12,0) [label=center:$F_k$] {};

\draw[thick] (6.6,0.6) -- (7.5,0.6); \draw[thick] (6.6,-0.6) -- (7.5,-0.6); \draw[thick] (10.5,0.6) -- (11.4,0.6); \draw[thick] (10.5,-0.6) -- (11.4,-0.6); 
\node [fo] (ck) at (12-0.6,-0.6) [label=above:${d}_k$] {};
\node [fo] (ak) at (12-0.6,0.6) [label=below:${a}_k$] {};
\node [fo] (dk) at (12.6,-0.6) [label=above:${c}_k$] {};
\node [fo] (bk) at (12.6,0.6) [label=below:${b}_k$] {};

\draw[thick] (b) -- (a2)  (b2) -- (a3)   (d) -- (c2)  (d2) -- (c3);
\draw[thick, bend right=15] (bk) to (a);
\draw[thick, bend right=-15] (dk) to (c);
\end{tikzpicture}
\end{center}
\caption{Construction of the graphs $G_k$ \label{figGk}}
\end{figure}

It is proved in \cite{Wi1,Wi2} that $\ml (G_{2k+1}) = k+1$. Therefore, if the J-cells used in the construction are the 8-vertex J-cells obtained from the Petersen graph, then $G_{2k+1}$ is cubic and its order is $n=16k+8$ for any $k\geq 2$, showing that $\ml (G_{2k+1}) = \lceil \frac{n}{16} + \frac12 \rceil$, indeed.

\smallskip

We describe another sequence of graphs $G$, for which we have the somewhat weaker lower bound $\ml (G) \geq \frac{n}{18} +\frac{13}9$. The reason is that for small values of $\ml (G)$ (at most 8) this gives graphs with a smaller order than the previous construction. Let us consider a 3-connected cubic graph $H$ and put a copy $P^*$ of a vertex-deleted Petersen graph into some of the vertices (see Figure~\ref{fig:gadget_general_3conn}) to obtain the 3-connected graph $G$. Since in all copies $P^*$ there must be either a cubic vertex or a leaf of any spanning tree (again by the non-hamiltonicity of the Petersen graph), by introducing $2k+1$ copies of $P^*$, we obtain at least $k+2$ leaves in any spanning tree. If $H$ has order $2k+2$, the graph $G$   has $9(2k+1) +1 = 18k+10$ vertices, thus $\ml (G) \geq  \frac{n}{18} +\frac{13}9$ indeed. It is worth mentioning that actually $\ml (G) =  \frac{n}{18} +\frac{13}9$, by a theorem in \cite{OZW}. 

\begin{figure}
\begin{center}
\begin{tikzpicture}[thick,every node/.style={fill, circle, inner sep=1pt}]
\begin{scope}
\node (a) at (0,0) {};
\draw (a) -- ++(90:.5) (a) -- ++(210:.5) (a) -- ++(330:.5);
\end{scope}
\begin{scope}[shift={(3.5,0)}]
\node (a) at (90:1) {};
\node (b) at (210:1) {};
\node (c) at (330:1) {};
\node (ab) at ($.75*(a) + .25*(b)$) {};
\node (ba) at ($.75*(b) + .25*(a)$) {};
\node (ac) at ($.75*(a) + .25*(c)$) {};
\node (ca) at ($.75*(c) + .25*(a)$) {};
\node (cb) at ($.75*(c) + .25*(b)$) {};
\node (bc) at ($.75*(b) + .25*(c)$) {};
\draw (a) -- (ab) -- (ba) -- (b) -- (bc) -- (cb) -- (c) -- (ca) -- (ac) -- (a)
      (ab) -- (cb) (ba) -- (ca) (ac) -- (bc)
      (a) -- ++(90:.5) (b) -- ++(210:.5) (c) -- ++(330:.5);
\end{scope}

\draw[-latex] (1,0) -- (2,0);
\end{tikzpicture}
\end{center}
\caption{An operation to construct a non-traceable 3-connected cubic graph from an arbitrary 3-connected cubic graph.}\label{fig:gadget_general_3conn}
\end{figure}
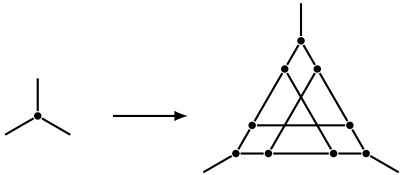

\begin{claim} 
The unique smallest non-traceable (that is, $\ml (G) \geq 3$) 3-connected cubic graph has order 28 and is given by the above construction, for $k=1$ and $H=K_4$. 
\end{claim}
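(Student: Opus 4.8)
The plan is to split the claim into an existence part (the described graph is a $3$-connected cubic non-traceable graph of order $28$) and a minimality/uniqueness part (no smaller such graph exists, and this one is unique on $28$ vertices).

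For existence, I would first check the order. Substituting three of the four vertices of $K_4$ by copies of the vertex-deleted Petersen graph $P^*$ (which has $9$ vertices, three of them being degree-$2$ connection vertices) contributes $3\cdot 9 = 27$ vertices, and the single remaining vertex of $K_4$ contributes the $28$th, matching $18k+10=28$ for $k=1$. Each connection vertex together with the one edge leaving it has degree $3$, so $G$ is cubic, and $3$-connectivity follows from the general construction. Non-traceability is immediate from the discussion preceding the claim: with $2k+1=3$ copies of $P^*$, every spanning tree has at least $k+2=3$ leaves, so $\ml(G)\ge 3$; since a traceable graph on at least two vertices has a Hamiltonian path, i.e.\ a spanning tree with exactly $2$ leaves, this shows $G$ is not traceable.

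For minimality and uniqueness I would rely on an exhaustive computer search. Using an established generator for (the $3$-connected) cubic graphs, I would produce all $3$-connected cubic graphs of order at most $28$ and test each one for the existence of a Hamiltonian path. The statement to be confirmed is that every such graph on fewer than $28$ vertices is traceable and that, among those on exactly $28$ vertices, precisely one---up to isomorphism, the graph $G$ constructed above---is non-traceable.

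The main obstacle is this second part: there is no short structural argument forcing the order up to $28$, since the bound $\ml(G)\le \tfrac{13}{85}n$ of Theorem~\ref{fo} only yields $n\ge 20$ from $\ml(G)\ge 3$. Thus the lower bound of $28$ and the uniqueness genuinely rest on the completeness and correctness of the enumeration. Care is needed to ensure that the generator outputs exactly the $3$-connected cubic graphs of each order, without omissions or duplicates, and that the Hamiltonian-path test is correct and efficient enough to process the large but finite family of graphs up to order $28$. Once the search exhibits the single non-traceable example and identifies it with the construction, the claim follows.
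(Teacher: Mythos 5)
Your proposal is correct and takes essentially the same approach as the paper: the existence half is the construction plus the leaf-counting argument already given before the claim, and the minimality/uniqueness half rests on an exhaustive computer search with traceability testing. The paper's search is in fact slightly broader (it generates all 2-connected cubic graphs up to 32 vertices with the generator \textit{snarkhunter} and uses two independently written traceability programs as a cross-check), which subsumes the 3-connected enumeration up to order 28 that you propose.
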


\begin{proof} We have seen that the graph obtained by the above construction with $k=1$ and $H=K_4$ has no spanning tree with fewer than 3 leaves and it obviously has order 28. 

We implemented two independent programs to test if a given graph is traceable. We then used the generator for cubic graphs called \textit{snarkhunter}~\cite{snarkhunter-site, brinkmann_11} to generate all 2-connected cubic graphs up to 32 vertices and tested the generated graphs for traceability. The results of both traceability programs were in complete agreement and showed that the smallest 2-connected non-traceable cubic graphs have 28 vertices and that there is a unique 3-connected non-traceable cubic graph on 28 vertices. The complete counts can be found in Table~\ref{table:2conn}.
\end{proof}

	\begin{table}[htb!]
		\centering 
		 \renewcommand{\arraystretch}{1.1} 
		\begin{tabular}{cccc} 
			\toprule 
			Order & Connectivity 2 & Connectivity 3 & Total  \\ 
			\midrule 
			0-26 & 0 & 0 & 0 \\ 	
			28 & 9 & 1 & 10 \\
			30 & 122 & 9 & 131 \\
			32 & 1814 & 126 & 1940 \\			
			\bottomrule 
		\end{tabular}
		\caption{Counts of all 2-connected non-tracable cubic graphs up to 32 vertices. All of these graphs have $\ml (G) = 3$.}\label{table:2conn}
	\end{table}		
	
The nine non-traceable cubic graphs on 28 vertices with connectivity 2 are shown in Figure~\ref{fig:28_conn2} and the unique non-traceable cubic graph on 28 vertices with connectivity 3 is shown in Figure~\ref{fig:28_conn3}. Finally, the nine non-traceable 3-connected cubic graphs on 30 vertices are shown in Figure~\ref{fig:30_conn3}. 
The graphs from Table~\ref{table:2conn} up to 30 vertices can also be downloaded and inspected in the database of interesting graphs from the \textit{House of Graphs}~\cite{BCGM} by searching for the keywords ``2-connected non-traceable''.

\begin{figure}
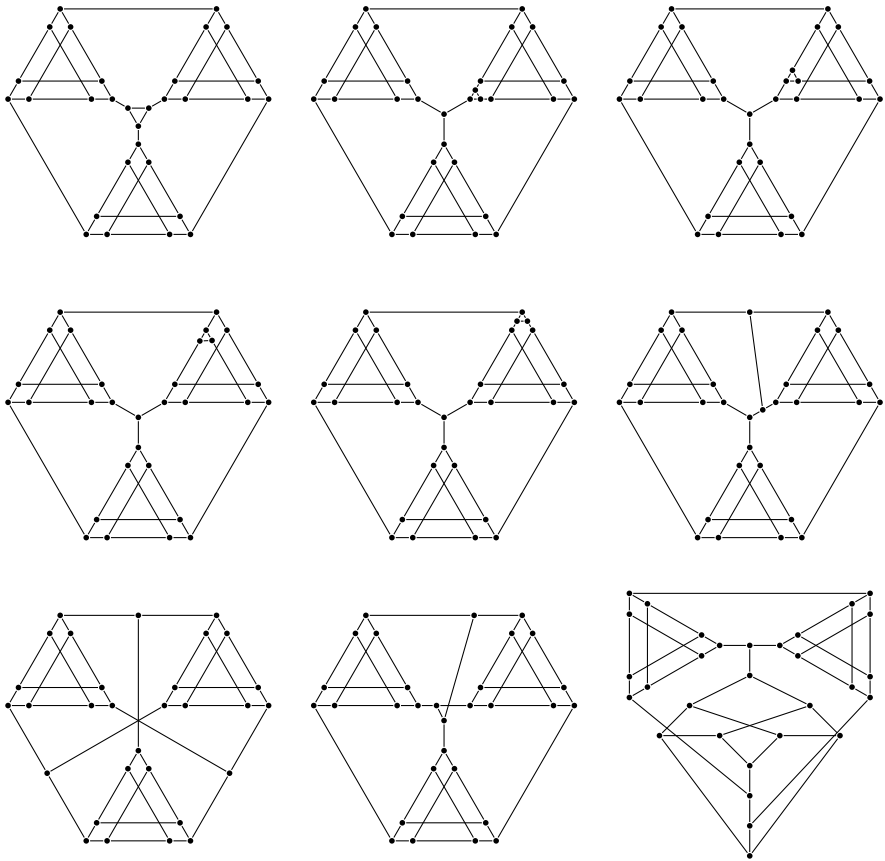

\tikzset{every node/.style={circle, fill, inner sep=.75pt, thick},every edge/.style={draw}}
\newcommand{\imagescale}{.24}
\newcommand{\imagebbL}{\useasboundingbox (-5,-3) rectangle (11,10);}
\newcommand{\imagebbdiamonds}{\useasboundingbox (-8,-3) rectangle (8,3);}
\newcommand{\imagebblastrow}{\useasboundingbox (-5,-4) rectangle (11,4);}
\newcommand{\imagebblastrowcenter}{\useasboundingbox (-8,-3.5) rectangle (8,4.5);}
\begin{center}

\end{center}
\caption{The nine non-traceable 3-connected cubic graphs on 30 vertices.}
\label{fig:30_conn3}
\end{figure}

\begin{claim} \label{claim_ml3}
All 2-connected non-traceable cubic graphs $G$ on up to 32 vertices have $\ml (G) = 3$, so the smallest 2-connected cubic graphs $G$ with $\ml (G) > 3$ have at least 34 vertices.
\end{claim}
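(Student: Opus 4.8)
The plan is a finite, exhaustive computer verification resting on one elementary structural observation. For any connected graph $G$ on $n\geq 2$ vertices every spanning tree has at least two leaves, so $\ml(G)\geq 2$, with $\ml(G)=2$ exactly when $G$ is traceable. Hence $\ml(G)\geq 3$ is equivalent to non-traceability, and $\ml(G)=3$ means precisely that $G$ is non-traceable yet possesses a spanning tree with exactly three leaves. A short degree count (a tree on $n$ vertices satisfies $\sum_v(\deg v-2)=-2$, and three leaves contribute $-3$) shows that such a tree has a single vertex of degree $3$ and all other internal vertices of degree $2$; equivalently it is a spanning subdivision of $K_{1,3}$, i.e.\ three internally disjoint paths emanating from one common vertex and together covering $V(G)$. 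Thus the claim reduces to showing that every $2$-connected non-traceable cubic graph on at most $32$ vertices admits such a spanning ``spider''.

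First I would reuse the complete list already produced in the proof of the previous claim: the generator \textit{snarkhunter} yields all $2$-connected cubic graphs on $n\leq 32$ vertices, and the two independent traceability checkers single out the non-traceable ones, whose counts are recorded in Table~\ref{table:2conn} ($0$ for $n\leq 26$, then $10$, $131$, and $1940$ for $n=28,30,32$). Because cubic graphs exist only for even $n$ and the generation is exhaustive, it suffices to treat exactly these finitely many graphs.

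Next, for each graph in the list I would run a routine that searches for a spanning tree with three leaves and, when successful, outputs its edge set as a certificate. Note that Claim~\ref{all1} gives $\mu(G)+1\leq\ml(G)\leq 2\mu(G)$; a non-traceable graph has $\mu(G)\geq 2$, and for the graphs at hand one finds $\mu(G)=2$, so $\ml(G)\in\{3,4\}$. Knowing the path covering number therefore does \emph{not} settle the value, which is precisely why an explicit search for a three-leaf spanning tree is needed rather than a path-cover computation. Since non-traceability already gives $\ml(G)\geq 3$, producing one valid three-leaf spanning tree certifies $\ml(G)=3$. To keep the conclusion independent of the search heuristics, I would re-check each returned certificate directly: confirm that the $n-1$ reported edges form a connected acyclic spanning subgraph and that exactly three of its vertices have degree one.

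Finally, combining the two cases, every $2$-connected cubic graph $G$ on at most $32$ vertices satisfies $\ml(G)\leq 3$, since the traceable ones have $\ml(G)=2$ and the non-traceable ones have $\ml(G)=3$. Consequently no such graph has $\ml(G)>3$, and as the next admissible order is $34$, the smallest $2$-connected cubic graphs with $\ml(G)>3$ have at least $34$ vertices. The main obstacle is purely computational scale: the full generation and traceability sieve over all cubic graphs up to order $32$ is the expensive step (there are very many traceable graphs to discard before reaching the $1940$ non-traceable ones on $32$ vertices), whereas the per-graph three-leaf search is light. Reliability is handled exactly as before: the two traceability programs agree completely, and the spanning-tree certificates are verified independently of how they were found.
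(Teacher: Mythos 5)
Your proposal is correct and follows essentially the same route as the paper: an exhaustive computer verification over the finite list of $2$-connected non-traceable cubic graphs on at most $32$ vertices (obtained from the same generation and traceability sieve), establishing for each one the existence of a spanning tree with exactly three leaves, which together with non-traceability ($\ml(G)\geq 3$) forces $\ml(G)=3$. The only difference is implementational and immaterial: the paper enumerates \emph{all} spanning trees via the Mayeda--Seshu algorithm to compute $\ml$ directly, whereas you search for a single three-leaf spanning ``spider'' and independently verify the certificate, which suffices for exactly the reason you give.
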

\begin{proof}
We implemented the Mayeda-Seshu algorithm \cite{MS:65} to compute all spanning trees and from this determined the minimum leaf number of all 2-connected non-traceable cubic graphs on up to 32 vertices and all graphs had at least one spanning tree with exactly three leaves.
\end{proof}

As an immediate corollary we get the following.

\begin{corollary}
Conjecture~\ref{sej1} and Conjecture~\ref{sej2} are true for graphs on up to 32 vertices.
\end{corollary}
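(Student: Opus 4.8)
The plan is to turn the corollary into a short finite arithmetic comparison, resting entirely on two facts already in hand: that the smallest $2$-connected non-traceable cubic graph has order $28$, and that, by Claim~\ref{claim_ml3}, every $2$-connected non-traceable cubic graph on at most $32$ vertices has $\ml(G)=3$. The organising observation is the dichotomy $\ml(G)=2$ exactly when $G$ is traceable and $\ml(G)\ge 3$ otherwise, combined with the universal floor $\ml(G)\ge 2$ valid for every graph on at least two vertices. In particular the only graphs that could threaten either conjectured bound are the non-traceable ones; for the finitely many small orders at which a conjectured right-hand side would drop below $2$ (namely $n\le 10$ for Conjecture~\ref{sej1} and $n\le 8$ for Conjecture~\ref{sej2}) the bound is to be read as $\max\{2,\cdot\}$, which is forced by $\ml(G)\ge 2$ and is met with equality since every such graph is traceable.

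For Conjecture~\ref{sej1} I would argue as follows. Every $2$-connected cubic graph on at most $26$ vertices is traceable, so $\ml(G)=2\le\lceil n/10\rceil$ throughout that range (under the reading above). The non-traceable cases occur only at $n\in\{28,30,32\}$, where Claim~\ref{claim_ml3} gives $\ml(G)=3$; since $\lceil 28/10\rceil=\lceil 30/10\rceil=3$ and $\lceil 32/10\rceil=4$, the inequality $\ml(G)\le\lceil n/10\rceil$ holds at every order, with equality at $n=28$ and $n=30$.

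For Conjecture~\ref{sej2} the same structural input applies, because every $3$-connected cubic graph is $2$-connected. Hence a $3$-connected cubic graph on at most $32$ vertices is either traceable with $\ml=2$, or non-traceable with $\ml=3$ and order in $\{28,30,32\}$ (the unique such graph on $28$ vertices and the nine on $30$ vertices are those in Figures~\ref{fig:28_conn3} and~\ref{fig:30_conn3}). It then suffices to check $\lceil n/16+\tfrac12\rceil\ge 3$ at $n=28,30,32$, where these ceilings equal $3,3,3$, and $\lceil n/16+\tfrac12\rceil\ge 2$ over the traceable range, which is immediate.

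I do not expect a genuine obstacle here: the corollary is immediate precisely because Claim~\ref{claim_ml3} has already absorbed all the difficulty, namely the exhaustive generation of the relevant cubic graphs and the exact computation of their minimum leaf numbers. What remains is only the bookkeeping of comparing the constants $2$ and $3$ against the two conjectured functions at the handful of admissible even orders, together with the small-order convention discussed above; none of this requires any new idea beyond the traceable/non-traceable dichotomy.
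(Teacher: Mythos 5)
Your proposal is correct and takes essentially the same route as the paper: the corollary is read off immediately from Claim~\ref{claim_ml3} together with Table~\ref{table:2conn} (non-traceable 2-connected cubic graphs first appear at order 28, and all of them up to order 32 have $\ml(G)=3$), plus the arithmetic check of the two ceilings at $n=28,30,32$. Your explicit $\max\{2,\cdot\}$ reading at small orders is a point of extra care the paper silently omits --- under the paper's definition $\ml(G)\ge 2$ always, so the literal bound $\lceil n/10\rceil$ would fail for, e.g., $K_4$ --- but this caveat does not change the method.
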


\begin{remark} 
It is worth mentioning that the smallest member of the first sequence of 3-connected cubic graphs (showing that Conjecture~\ref{sej2} is sharp) has order 40. However, the ceiling sign makes the conjecture still plausible.
\end{remark} 



\subsection{Planar case}

Now we turn our attention to the planar case. The Zoeram-Yaqubi example is planar, so for not necessarily 2-connected planar graphs one has the same results as for the general case. 

\begin{conjecture} \label{sej3}
If $G$ is a 2-connected planar cubic graph  of order $n$ then $\ml (G) \leq \frac{n}{14} + 1 $.
\end{conjecture}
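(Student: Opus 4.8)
Since Conjecture~\ref{sej3} is open, what follows is a plan of attack rather than a finished argument. The plan is to first isolate the extremal planar gadget and then to establish optimality through a planarity-aware amortized analysis.

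I would begin with the sharpness side, since the extremal examples dictate the shape of the proof. The enumeration above shows that the smallest $2$-connected planar non-traceable cubic graph has order $28 = 2\cdot 14$ and satisfies $\ml = 3$; this is exactly $\frac{n}{14}+1$ with $n=28$, and it suggests a planar version of the substitution in Figure~\ref{fig:gadget_general_2conn}. Concretely, I would replace each vertex of a length-$k$ cycle by a planar cubic gadget on $14$ vertices possessing two attachment vertices of degree $2$, chosen so that no spanning forest of the gadget is a single path with both endpoints at the attachment vertices (a planar substitute for the blocking role played by an edge-deleted Petersen graph). Each gadget then forces at least one interior leaf of any spanning tree, and a global obstruction coming from the cyclic gluing forces one more, giving $\ml(G) = k+1 = \frac{n}{14}+1$, with the order-$28$ graph of Figure~\ref{fig:28_conn2} as the case $k=2$. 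The first concrete step is therefore to extract and verify such a $14$-vertex planar blocking gadget.

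For the upper bound I would adapt the machinery behind Theorem~\ref{fo}. The reason the general $2$-connected rate is only $\frac{13}{85}$ is that the densest leaf-forcing configurations are built from edge-deleted Petersen graphs, and the crucial point is that these are \emph{non-planar}; in a planar graph the leaf-forcing regions must be larger. The plan is to run a greedy, refined depth-first spanning-tree construction in the spirit of Salamon and Wiener~\cite{sala} and to analyse it by discharging over a fixed planar embedding, using Euler's formula (so that $|F| = \frac{n}{2}+2$ for a cubic planar graph) to convert the local analysis into a global count. One tunes the discharging rules so that every leaf created can be charged against a region of at least $14$ vertices, with the planar gadget above as the unique tight case. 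Because only $2$-connectivity is assumed, a preliminary reduction along $2$-edge-cuts is needed: these cuts split $G$ into a cyclic chain of bricks, and one proves the estimate brick-by-brick and then sums, which is also where the additive constant $1$ naturally appears.

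The step I expect to be the main obstacle is the discharging itself: producing an unavoidable set of local planar configurations and showing each is reducible, in the sense that a partially constructed spanning tree can always be routed through it without exceeding the $\frac{1}{14}$ leaf rate. Calibrating the charges so that the $14$-vertex planar gadget is simultaneously the tight configuration and the Petersen gadget is excluded is delicate, and I would expect the unavoidable-set verification to require computer assistance analogous to the \textit{snarkhunter} enumeration used above for the small orders. A secondary difficulty is ensuring that the interface between the discharging argument and the $2$-cut decomposition is clean, so that gluing bricks does not create leaves unaccounted for by the per-brick analysis.
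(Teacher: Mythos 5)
Both you and the paper leave the upper bound of Conjecture~\ref{sej3} open: the paper offers no proof of $\ml(G)\le \frac{n}{14}+1$, only supporting evidence, so your discharging plan cannot be checked against anything in the text and remains, as you say yourself, speculation. What the paper does prove, and what can be compared with your proposal, is the sharpness statement, namely Claim~\ref{allml14}: for an explicit family of 2-connected planar cubic graphs one has $\ml(G)=\frac{n}{14}+1$ exactly. Here your plan has a concrete gap.

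Your construction (a cycle of $k$ fourteen-vertex planar blocking gadgets) forces one leaf per gadget and hence only $\ml(G)\ge k=\frac{n}{14}$; the extra $+1$ that you attribute to ``a global obstruction coming from the cyclic gluing'' is unjustified, and it fails in the very construction you are imitating: for the cycle of edge-deleted Petersen graphs (Figure~\ref{fig:gadget_general_2conn}) the paper remarks that $\mu(G)=\frac{n}{20}$, so $\ml(G)=2\mu(G)=\frac{n}{10}$ --- exactly one leaf per gadget and no global surplus. There is no reason to expect your planar analogue to behave better, so it should be expected to give $\ml=\frac{n}{14}$, strictly below the conjectured extremal value. (Existence of the gadget is not the obstacle: applying the paper's own edge-substitution to the theta multigraph yields a 14-vertex non-hamiltonian 2-connected planar cubic graph, and deleting any edge of it gives a blocking gadget.) The paper obtains the $+1$ by a different mechanism. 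It replaces every \emph{edge} of an arbitrary 2-connected cubic planar multigraph $H$ of order $k$ by an edge-deleted $K_4$ (Figure~\ref{fig:gadget_planar_2conn}), so $n=7k$; deleting the $k$ original branch vertices leaves $\frac{3}{2}k$ diamond components, whence $\mu(G)\ge\frac{k}{2}=\frac{n}{14}$, and then Claim~\ref{all1} gives $\ml(G)\ge\mu(G)+1=\frac{n}{14}+1$ for free. In other words, the lower bound is placed on the path covering number, not directly on leaves, and the increment comes from the inequality $\ml\ge\mu+1$ rather than from any gadget geometry; the matching upper bound in Claim~\ref{allml14} is then built via Petersen's theorem, lifting a 2-factor of $H$ to a path cover of $G$ of size $\frac{k}{2}$ that extends to a spanning tree with $\frac{k}{2}+1$ leaves. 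Your numerological reading of the order-28 examples is also off: the paper's planar non-traceable cubic graphs on 28 vertices arise from this construction with $H=K_4$ (so $k=4$, $n=7\cdot4=28$), not as two 14-vertex blocking gadgets. If you want a leaf-forcing route to the exact bound, you must either exhibit a family where the forced-leaf count exceeds $\frac{n}{14}$ by one, or, as the paper does, shift the lower bound to $\mu$ and let Claim~\ref{all1} supply the final $+1$.
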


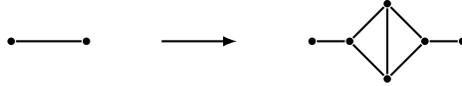
\begin{figure}
\begin{center}
\begin{tikzpicture}[thick,every node/.style={fill, circle, inner sep=1pt}]
\begin{scope}
\node (a) at (-1,0) {};
\node (b) at (0,0) {};
\draw (a) -- (b);
\end{scope}
\begin{scope}[shift={(4,0)}]
\node (a) at (-1,0) {};
\node (b) at (1,0) {};
\node (c) at (-.5,0) {};
\node (d) at (.5,0) {};
\node (e) at (0,.5) {};
\node (f) at (0,-.5) {};
\draw (a) -- (c) -- (e) -- (f) -- (d) -- (b) (c) -- (f) (d) -- (e);
\end{scope}

\draw[-latex] (1,0) -- (2,0);
\end{tikzpicture}
\end{center}
\caption{An operation to construct a non-traceable 2-connected planar cubic graph from an arbitrary 2-connected cubic multigraph.}\label{fig:gadget_planar_2conn}
\end{figure}

Let $H$ be an arbitrary 2-connected cubic planar multigraph of order $k$ and let us replace every edge of $H$ by an edge-deleted $K_4$ (we delete the edges and connect their endvertices to the degree 2 vertices of the respective copy of the edge-deleted $K_4$, see Figure~\ref{fig:gadget_planar_2conn}). Let us call the graph obtained in this way $G$. It is obvious that $G$ is 2-connected, cubic and its order is $n=7k$. Yu~\cite{Yu} showed that $\mu (G) \geq \frac{n}{14}  $ (actually he proved it for the case when $H$ is a simple graph, but the proof also works for multigraphs). First we give a short proof of this result: by deleting those $k$ vertices of $G$ that also appear in $H$, we obtain $\frac32 k$ components, thus $G$ cannot be covered by less than $\frac32 k - k =\frac{k}2=\frac{n}{14}$ paths. This immediately implies $\ml (G) \leq \frac{n}{14} + 1 $ (by Claim~\ref{all1}). In order to provide some support for Conjecture~\ref{sej3}, we also prove that regardless of the starting graph $H$, we have $\ml (G) = \frac{n}{14} + 1 $, from which $\mu (G) = \frac{n}{14}$ also follows (again, by Claim~\ref{all1}). Therefore these graphs $G$ show that the bound $\mu (G) +1 \leq \ml (G)$ of Claim \ref{all1} is sharp even for 2-connected cubic graphs. 

\begin{claim} \label{allml14}
If \(G\) is a graph obtained by the construction above, then $\ml (G) = \frac{n}{14} + 1 $.
\end{claim}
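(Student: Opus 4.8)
The bound splits into two inequalities, and only one of them carries content. The lower bound $\ml(G)\ge \frac{n}{14}+1$ is already in hand: the vertex-deletion argument given just before the claim shows $\mu(G)\ge \frac{n}{14}$, and Claim~\ref{all1} then yields $\ml(G)\ge\mu(G)+1\ge\frac{n}{14}+1$. So the whole task is the matching upper bound, which I would establish by exhibiting a single spanning tree of $G$ with exactly $\frac{n}{14}+1=\frac{k}{2}+1$ leaves, where $k=|V(H)|$ and $n=7k$.

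The construction is driven by a spanning tree of the base (multi)graph $H$. For each edge $e=uv$ of $H$ write the inserted gadget $Q_e$ (an edge-deleted $K_4$) with ports $p_u,p_v$ joined respectively to $u$ and $v$; the structural fact I will use repeatedly is that $Q_e$ has a Hamiltonian path between its two ports $p_u$ and $p_v$. Fix a spanning tree $H'$ of $H$ (it exists since $H$ is connected), and call a gadget \emph{pass-through} if its edge lies in $H'$ and \emph{pendant} otherwise. For a pass-through gadget I keep both port edges $up_u,vp_v$ together with the port-to-port Hamiltonian path inside $Q_e$, turning the gadget into an internal segment of a path joining $u$ and $v$ and contributing no leaf. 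For a pendant gadget on $uv$ I pick one endpoint, say $u$, keep only the port edge $up_u$ together with the internal Hamiltonian path, and discard $vp_v$; this spans the four gadget vertices as a path dangling from $u$ whose far port $p_v$ has degree $1$, so it yields exactly one leaf. Since $H'$ has $k-1$ edges while $H$ has $\frac{3k}{2}$ edges, there are $\frac{k}{2}+1$ pendant gadgets and hence $\frac{k}{2}+1$ gadget-leaves.

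I would then check that the chosen edge set $T$ is genuinely a spanning tree. It has $5(k-1)+4\bigl(\frac{k}{2}+1\bigr)=7k-1=n-1$ edges, and it is connected because $H'$ links all vertices of $H$ through the pass-through segments while every pendant gadget attaches to one such vertex; a connected graph on $n$ vertices with $n-1$ edges is a tree, so acyclicity comes for free. Internal gadget vertices and pass-through ports all have degree $2$, so the only gadget-leaves are the $\frac{k}{2}+1$ discarded far ports already counted.

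The one genuine obstacle is ensuring that \emph{no} original vertex of $H$ becomes a leaf of $T$, since any such extra leaf would spoil the count. An original vertex $v$ has degree in $T$ equal to its $H'$-degree plus the number of pendant gadgets I choose to hang from it; vertices of $H'$-degree at least $2$ are automatically safe, so the only danger is the leaves of $H'$, each having $H'$-degree $1$ and therefore exactly two incident non-tree (pendant) edges, at least one of which must be made to hang from it. This is precisely a system-of-distinct-representatives problem: assign to each $H'$-leaf a distinct incident non-tree edge. I would verify Hall's condition by counting — for any set $S$ of $H'$-leaves each leaf contributes two incident non-tree edges while each non-tree edge is incident to at most two leaves, so the non-tree edges meeting $S$ number at least $\frac{2|S|}{2}=|S|$. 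Hall's theorem then supplies the assignment, no $H'$-leaf turns into a $T$-leaf, and the leaf count is exactly $\frac{k}{2}+1$. Combining this with the lower bound gives $\ml(G)=\frac{n}{14}+1$, and the asserted value $\mu(G)=\frac{n}{14}$ follows from Claim~\ref{all1}.
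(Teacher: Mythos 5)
Your proof is correct, but it takes a genuinely different route from the paper's. The paper stays inside its path-cover framework: it invokes Petersen's theorem to obtain a 2-factor $F$ of $H$, lifts each cycle of $F$ to a cycle of $G$ running through the gadgets on its edges, covers the gadgets sitting on the complementary perfect matching by $\frac{k}{2}$ port-to-port Hamiltonian paths, and then uses an orientation argument on the graph obtained by contracting the cycles (this is where the 2-edge-connectivity of $H$ is essential) to attach a matching-path to each cycle; deleting one edge per cycle yields a path cover by $\frac{k}{2}$ paths, each joinable to another at an endvertex, which is finally extended to a spanning tree with at most $\frac{k}{2}+1$ leaves. You instead build the spanning tree directly from a spanning tree $H'$ of $H$, turning the gadgets on tree edges into pass-through segments and hanging the $\frac{3k}{2}-(k-1)=\frac{k}{2}+1$ remaining gadgets as pendants, with Hall's theorem supplying the assignment that prevents the $H'$-leaves from becoming leaves of $T$. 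Your Hall verification is sound: since $H$ is cubic and 2-connected it has no loops (a loop at a cubic vertex would force a bridge), so every $H'$-leaf meets exactly two non-tree edges and every non-tree edge meets at most two leaves, giving the counting bound you state; the edge count $5(k-1)+4\bigl(\frac{k}{2}+1\bigr)=n-1$ plus connectedness then certifies the tree. Your route is more elementary --- no Petersen's theorem, no 2-factor, no orientation of a contracted graph --- and it uses only the connectivity of $H$, so it actually establishes the claim for every connected loopless cubic multigraph $H$, not just 2-connected ones. What the paper's route buys is an explicit minimum path cover of $G$ by $\frac{k}{2}$ paths along the way, in line with the paper's overall strategy of producing spanning trees from path covers, whereas you recover $\mu(G)=\frac{n}{14}$ only indirectly via Claim~\ref{all1}; both arguments end at the same equalities.
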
  

\begin{proof} We have seen that $\ml (G) \geq \frac{n}{14} + 1 $, thus it suffices to show that $G$ has a spanning tree with $\frac{n}{14} + 1 = \frac{k}2 +1$ leaves. By Petersen's well-known theorem there exists a perfect matching in $H$ (since it is 2-connected and cubic), thus there also exists a 2-factor $F$, which consists of (say) $m$ cycles. Since the edge-deleted $K_4$'s have a hamiltonian path between the degree 2 vertices, any cycle $C$ of the 2-factor $F$ of $H$ naturally determines a cycle $C'$ in $G$ which goes through all vertices of ($G$ corresponding to) $C$ and all vertices of the copies of the edge-deleted $K_4$'s substituting the edges of $C$. This gives a set of $m$ cycles $F'$ in $G$ covering $5k$ vertices. 

Let us now cover the rest of the vertices (those vertices of $G$ that belong to the edges of the perfect matching of $H$) by $\frac{k}2$ paths (this is again possible because there is a hamiltonian path in the edge-deleted $K_4$ between the degree 2 vertices). Some of these paths can be joined to the cycles of $F'$, more precisely to each cycle of $H$ in $F$ we can assign an edge of the perfect matching of $H$, such that every edge is assigned to at most one cycle
and every assigned edge connects two cycles of $F'$. The reason is that if we contract the cycles, we obtain a 2-connected graph (more precisely, 2-connected, if it has at least 3 vertices, but always 2-edge-connected),
and we just have to orient the edges of the resulting graph $G'$ (if there are any), such that all vertices have in-degree at least 1, which is not difficult. We just have to delete an edge (the graph remains connected), then execute a Breadth First Search from a vertex adjacent with the deleted edge, and orient the edges from the root. The deleted edge is oriented towards the root. If $G'$ has just one vertex (that is, $F$ consists of one cycle), any edge of the perfect matching is suitable.

In this way we obtained a cover of all vertices of $G$ by a set $S$ of $m$ cycles to which paths of 4 vertices are joined and a set of $\frac{k}2- m$ paths of 4 vertices. By deleting a suitable edge of each element of $S$ we obtain a path cover of $G$   
 by $m+(\frac{k}2-m)=\frac{k}2$ paths, such that all paths have an endvertex adjacent to a vertex of another path (this latter property is obvious for the paths of 4 vertices and also easy to see for the longer paths becasue of the choice that the assigned edge connects two cycles of $F'$).
This means that we can extend the forest consisting of the paths to a spanning tree with at most $\frac{k}2 +1$ leaves, and the result follows.
\end{proof}

For 3-connected planar graphs the upper bound is probably much better, since the smallest non-hamiltonian such graph is the Barnette-Bos\'ak-Lederberg graph of order 38~\cite{Bo,Le}. 
For a lower bound we can use the construction given for the 3-connected case (Conjecture~\ref{sej2}), with a J-cell obtained from the Barnette-Bos\'ak-Lederberg graph instead of the Petersen graph. 
This gives a lower bound of $\frac{n}{72}+\frac12$. We can also use the second construction for 3-connected graphs (again with the Barnette-Bos\'ak-Lederberg graph, with one vertex deleted), giving a lower bound $\frac{n}{74}+\frac{55}{37}$, which is not better than $\lceil \frac{n}{72}+\frac12 \rceil$. (The small values are better without the ceiling, up to 36 leaves.)

Knorr~\cite{Kn:10} showed that all 3-connected cubic planar graphs on up to at least 52 vertices are traceable. 
The smallest non-traceable 3-connected cubic planar graph known so far has 88 vertices and was constructed by Zamfirescu~\cite{Za:80}. 
Using the generator \textit{plantri}~\cite{BM07} and our program for testing if a graph is traceable, we were able to show the following.

\begin{claim}
All 3-connected cubic planar graphs with girth 5 up to at least 74 vertices are traceable.
\end{claim}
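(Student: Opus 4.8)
The plan is to establish this claim by an exhaustive computer search, in the same spirit as the enumeration results proved earlier in the paper. First I would generate the complete list of all $3$-connected cubic planar graphs of girth at least $5$ on $n$ vertices, for every (even) $n$ up to at least $74$, using the generator \textit{plantri}~\cite{BM07}, which can be configured to output exactly the $3$-connected cubic planar graphs and to discard those of small girth already during generation. Restricting to girth $5$ is precisely what makes the task feasible: the number of \emph{all} $3$-connected cubic planar graphs grows far too fast to exhaust up to $74$ vertices, but forbidding faces of length $3$ and $4$ (which under planar duality corresponds to generating planar triangulations of minimum degree at least $5$) prunes the search space enormously, and \textit{plantri} is built to enumerate exactly such triangulations efficiently.

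For each generated graph I would then run the traceability tester already described and validated earlier in the paper, checking for the existence of a hamiltonian path. To guard against implementation errors I would run both independent traceability programs and verify that their verdicts agree on every single graph, exactly as was done to certify the non-traceable examples of orders $28$ and $30$. Since every generated graph turns out to be traceable, the claim follows immediately.

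The principal obstacle is computational cost rather than mathematical subtlety. Deciding traceability is \textsf{NP}-complete in general, so in the worst case an individual test could be slow, and even after the girth restriction the number of candidates near $n=74$ is very large. Two features keep the search tractable in practice: \textit{plantri} produces each graph in time essentially proportional to the size of its output, avoiding expensive isomorphism rejection, and a backtracking search for a hamiltonian path on a cubic graph branches only mildly because every vertex is incident with just three edges. The reliability of the final conclusion ultimately rests on the correctness of \textit{plantri} and of the two traceability programs; the complete agreement of the two independent implementations on all inputs, including the nontrivial non-traceable graphs exhibited in the figures, provides strong corroboration.
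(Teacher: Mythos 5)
Your proposal matches the paper's approach exactly: the claim is established by generating all $3$-connected cubic planar graphs of girth $5$ up to $74$ vertices with \textit{plantri} and verifying traceability of each with the testing program(s) developed earlier in the paper. The additional details you supply (duality with minimum-degree-$5$ triangulations, cross-checking two independent testers) are sensible refinements of the same computational argument, not a different route.
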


\subsection{Bipartite case}

Finally we deal with the bipartite case.

\begin{conjecture}
If $G$ is a bipartite cubic graph of order $n$  then $\ml (G) \leq \frac{n}{20}+1$. 
\end{conjecture}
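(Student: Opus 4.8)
The plan is to prove the upper bound $\ml(G)\le n/20+1$ by exploiting the rigidity that bipartiteness forces on a cubic graph, adapting the two-factor merging argument used for the fixed construction in Claim~\ref{allml14} to an \emph{arbitrary} bipartite cubic $G$. The one structural tool I would lean on is K\"onig's edge-colouring theorem: since $G$ is bipartite and cubic it is properly $3$-edge-colourable (hence, in particular, bridgeless), and the three colour classes are perfect matchings $M_1,M_2,M_3$. Then $F=M_1\cup M_2$ is a $2$-factor all of whose cycles $C_1,\dots,C_m$ are \emph{even}, and $M_3$ is a perfect matching whose edges join or lie inside these cycles. Because $G$ is connected, the auxiliary multigraph $H$ whose vertices are the cycles $C_i$ and whose edges are the $M_3$-edges running between distinct cycles is connected.

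From $F$ and $M_3$ I would build the spanning tree as follows. Fix a spanning tree $T_H$ of $H$ and retain only its $m-1$ matching edges between cycles. Then inflate each node of $T_H$ back into its cycle: deleting one edge of $C_i$ turns it into a path, and I route this path so that, whenever possible, the retained matching edges incident to $C_i$ attach at its two path-endpoints rather than in its interior. Here the evenness of each $C_i$ is what gives the freedom to choose which adjacent pair of vertices becomes the endpoints and in which bipartition class each endpoint lies. Since any spanning tree of the cubic graph $G$ has maximum degree $3$, its number of leaves satisfies $\ell=d_3+2$, where $d_3$ is the number of branch (degree-$3$) vertices; the branch vertices are exactly the interior attachment points that could not be pushed to an endpoint, so the task reduces to bounding $d_3$ by $n/20-1$.

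The genuinely hard part — and the reason this is stated only as a conjecture — is this branch-vertex count. A cycle of $T_H$-degree $d_i$ forces at least $d_i-2$ interior attachments, so a naive merging controls $d_3$ only through $\sum_i\max(d_i-2,0)$, which is hopelessly large when $F$ consists of many short ($4$- or $6$-) cycles of high degree in $T_H$. To reach the constant $20$ I would run a discharging argument over the cycles of $F$: each surviving branch vertex is charged to a local cluster of cycles, and one must show that such a cluster occupies at least $20$ vertices of $G$, using (i) that every cycle is even, (ii) that the three perfect matchings severely constrain how short cycles may be adjacent, and (iii) that any tight packing of short even cycles in a bipartite cubic graph offers an alternative connecting edge along which the path can be rerouted so as to erase a branch. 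An equivalent route is to first establish the Reed-type inequality $\mu(G)\le n/20$ for bipartite cubic $G$ directly from the even $2$-factor and then to show that an even $2$-factor always admits a merging attaining $\ml(G)=\mu(G)+1$ (the lower end of Claim~\ref{all1}); in both formulations the real obstacle is the same, namely ruling out dense agglomerations of short even cycles, which is exactly where the extremal graphs — built from $20$-vertex blocks, each forcing one extra leaf in the spirit of Claim~\ref{allml14} — are expected to live. Note that only the non-traceable $G$ are in question, since for traceable $G$ one has $\ml(G)=2$, which meets the bound as soon as $n\ge20$.
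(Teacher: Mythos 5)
There is a genuine gap, and you name it yourself: the entire content of the statement is missing. Note first that this statement is a \emph{conjecture} in the paper; the authors give no proof, only a sharpness construction (replace each edge of a bipartite cubic graph $H$ by an edge-deleted $K_{3,3}$, Figure~\ref{fig:2conn_bip}), so there is no paper argument for your proposal to match. Your setup is sound as far as it goes --- K\"onig's theorem gives three perfect matchings, $M_1\cup M_2$ is an even $2$-factor, the cycle-contraction multigraph $H$ is connected, and in a spanning tree of a cubic graph the number of leaves equals the number of degree-$3$ vertices plus $2$ --- but this only reduces the conjecture to the inequality $d_3\le n/20-1$, and the step that would establish that inequality (your ``discharging argument over the cycles of $F$'') is never carried out. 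That step is not a routine verification: if $F$ consists of $4$-cycles each of $T_H$-degree $4$, naive merging produces roughly one branch vertex per two vertices of $G$, a factor of ten away from the target, so everything hinges on the unproved claim that dense clusters of short even cycles always admit reroutings that erase branches. This is precisely why the statement is open, not a detail to be deferred.

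Your alternative route is also not a fallback but an equally open problem: the bound $\mu(G)\le n/20$ for bipartite cubic graphs is not known (the best proven bound even for $2$-connected cubic graphs is Yu's $n/10$, cited in the paper), and the further claim that an even $2$-factor always permits a merging achieving $\ml(G)=\mu(G)+1$, i.e.\ the lower end of Claim~\ref{all1}, would need its own proof --- the paper shows this equality only for the specific inflated graphs of Claim~\ref{allml14}, whose block structure is prescribed in advance. One further caution: as literally stated the conjectured bound is violated by small hamiltonian examples such as $K_{3,3}$ (where $\ml=2>6/20+1$), so any correct proof must in effect be asymptotic or handle small orders separately; your closing remark gestures at this but a complete argument would have to address it explicitly.
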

\begin{conjecture}
If $G$ is a bipartite planar cubic graph of order $n$  then $\ml (G) \leq \frac{n}{26}+1$. 
\end{conjecture}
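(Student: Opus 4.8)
The plan is to reduce the bound to a statement about the path covering number together with the path-linking idea already exploited in the proof of Claim~\ref{allml14}. By Claim~\ref{all1} we always have $\mu(G)+1 \le \ml(G) \le 2\mu(G)$, and in the extremal examples equality $\ml(G)=\mu(G)+1$ holds, so it would suffice to prove two things: (i) every bipartite planar cubic graph $G$ has a vertex-disjoint path cover with at most $\tfrac{n}{26}$ paths, and (ii) a near-optimal such cover can be grown into a spanning tree while creating at most one extra leaf, i.e.\ the favourable bound $\ml(G)=\mu(G)+1$ holds rather than the generic $\ml(G)\le 2\mu(G)$. Small orders and graphs of connectivity $1$ or $2$ would be reduced or checked separately, much as connectivity is handled in the passage from Theorem~\ref{fo} to Theorem~\ref{al}.

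For step (i) I would run a Reed--Yu style path-cover argument, but feed it the extra structural slack available in the bipartite planar setting. A connected bipartite planar cubic graph on $n$ vertices has $\tfrac{3n}{2}$ edges and, by Euler's formula, $\tfrac{n}{2}+2$ faces whose lengths sum to $3n$; since every face has even length at least $4$, the average face length tends to $6$ and short faces are scarce. This is precisely the local-density information that discharging arguments behind $\mu(G)\le\lceil n/9\rceil$ (Reed) and $\mu(G)\le n/10$ (Yu) exploit, and the aim is to sharpen the constant all the way to $\tfrac{1}{26}$ by setting up a discharging scheme on vertices and faces in which the absence of short odd faces rules out the dense configurations that form the bottleneck for general cubic graphs.

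For step (ii) the mechanism of Claim~\ref{allml14} should transfer: pass to a $2$-factor, contract its cycles, observe that the contracted graph is $2$-edge-connected, and orient it so that every contracted cycle receives an incoming edge; this lets the covering paths be attached to one another so that the resulting forest extends to a spanning tree with only a single new leaf. The subtlety is to carry out this linking \emph{simultaneously} with the cover produced in step (i), without inflating the number of paths.

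The main obstacle, I expect, is the tightness of the constant $26$. The bound is conjecturally best possible, the natural extremal examples being obtained by substituting an edge-deleted copy of the smallest $2$-connected bipartite planar cubic non-hamiltonian graph---the order-$26$ Asano--Exoo--Harary--Saito graph~\cite{AEHS}---into a host cubic graph, exactly as the Petersen- and Barnette--Bos\'ak--Lederberg-based gadgets realise the bounds of Conjectures~\ref{sej1} and~\ref{sej2}. Matching $\tfrac{1}{26}$ therefore forces the discharging analysis of step (i) to be essentially lossless, which is far more demanding than reproving a Reed-type estimate; and guaranteeing $\ml=\mu+1$ rather than something between $\mu+1$ and $2\mu$ in step (ii) will likely require a detailed case analysis of how the covering paths interact with the faces of $G$. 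It is quite possible that, as with the main theorem, a direct refined depth-first-search construction of the spanning tree---tuned to the bipartite planar structure---will in the end be cleaner than routing everything through $\mu(G)$.
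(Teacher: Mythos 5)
You are attempting to prove a statement that the paper itself leaves open: this is one of the paper's \emph{conjectures}, and the authors give no proof of it, only a construction showing the bound $\frac{n}{26}+1$ would be best possible. Your proposal does not close this gap either --- it is a research programme whose two pillars are each essentially as hard as the conjecture itself. Step (i), the path-cover bound $\mu(G)\le\frac{n}{26}$ for bipartite planar cubic graphs, is a consequence of the conjecture (via Claim~\ref{all1}) and is far beyond any known technique: the state of the art is Reed's $\mu(G)\le\lceil n/9\rceil$ for connected cubic graphs~\cite{Re} and Yu's $\mu(G)\le\frac{n}{10}$ for 2-connected cubic graphs~\cite{Yu}, and you only gesture at a discharging scheme that would have to improve these constants by a factor of about $2.6$. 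Step (ii), that a (near-)minimum path cover can always be grown into a spanning tree creating only one extra leaf, i.e.\ $\ml(G)=\mu(G)+1$, is unsupported and is false for cubic graphs in general: the paper exhibits 2-connected cubic graphs with $\ml(G)=2\mu(G)$ (the remark after Conjecture~\ref{sej1}), and Theorem~\ref{utso} only limits when this can occur. The linking mechanism of Claim~\ref{allml14} that you invoke works there because the cover is built from a 2-factor of the host graph $H$ whose cycles contract to a 2-edge-connected graph; nothing guarantees such structure for an arbitrary minimum cover of an arbitrary bipartite planar cubic graph, so any proof of (ii) would have to use bipartiteness and planarity in an essential way that your sketch does not.

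There is also a concrete factual error in your discussion of sharpness. The paper's extremal construction for this conjecture does not substitute edge-deleted copies of the 26-vertex Asano--Exoo--Harary--Saito graph~\cite{AEHS} into a host graph; it replaces every \emph{edge} of a 2-connected bipartite planar cubic graph $H$ of order $k$ by an edge-deleted cube, giving $n=13k$, and the component-counting argument used for Conjecture~\ref{sej3} yields $\mu(G)\ge\frac{3k}{2}-k=\frac{n}{26}$, hence $\ml(G)\ge\frac{n}{26}+1$ by Claim~\ref{all1}. The constant $26$ comes from this edge-substitution arithmetic ($26=2\cdot 13$), not from the order of the AEHS graph; a vertex substitution of edge-deleted AEHS copies into a cycle would only give $\ml(G)\ge\lceil\frac{n}{26}\rceil$, which is weaker than the conjectured extremal value.
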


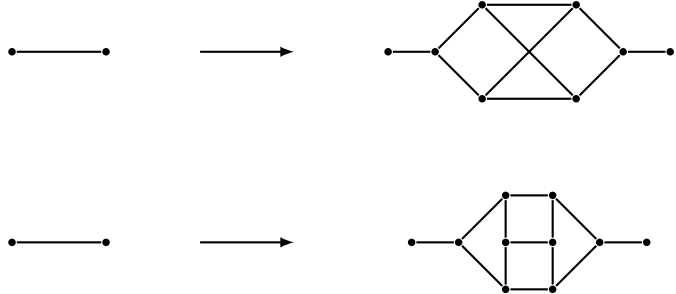
\begin{figure}
\newcommand{\imagescale}{1.25}
\begin{center}
\begin{tikzpicture}[thick,every node/.style={fill, circle, inner sep=1pt},scale=\imagescale]
\useasboundingbox (-2,-1) rectangle (7,1);
\begin{scope}
\node (a) at (-1,0) {};
\node (b) at (0,0) {};
\draw (a) -- (b);
\end{scope}
\begin{scope}[shift={(4,0)}]
\node (a) at (-1,0) {};
\node (b) at (2,0) {};
\node (c) at (-.5,0) {};
\node (d) at (1.5,0) {};
\node (e) at (0,.5) {};
\node (f) at (0,-.5) {};
\node (g) at (1,.5) {};
\node (h) at (1,-.5) {};
\draw (a) -- (c) -- (e) -- (g) -- (d) -- (h) -- (f) -- (c) (e) -- (h) (f) -- (g) (d) -- (b);
\end{scope}

\draw[-latex] (1,0) -- (2,0);
\end{tikzpicture}

\begin{tikzpicture}[thick,every node/.style={fill, circle, inner sep=1pt},scale=\imagescale]
\useasboundingbox (-2,-1) rectangle (7,1);
\begin{scope}
\node (a) at (-1,0) {};
\node (b) at (0,0) {};
\draw (a) -- (b);
\end{scope}
\begin{scope}[shift={(4.25,0)}]
\node (a) at (-1,0) {};
\node (b) at (1.5,0) {};
\node (c) at (-.5,0) {};
\node (d) at (1,0) {};
\node (e) at (0,.5) {};
\node (f) at (0,-.5) {};
\node (g) at (.5,.5) {};
\node (h) at (.5,-.5) {};
\node (i) at (0,0) {};
\node (j) at (.5,0) {};
\draw (a) -- (c) -- (e) -- (g) -- (d) -- (h) -- (f) -- (c) (e) -- (i) -- (f) (g) -- (j) -- (h) (i) -- (j) (d) -- (b);
\end{scope}

\draw[-latex] (1,0) -- (2,0);
\end{tikzpicture}
\end{center}
\caption{Top: An operation to construct a non-traceable 2-connected bipartite cubic graph from an arbitrary 2-connected bipartite cubic graph. Bottom: An operation to construct a non-traceable 2-connected bipartite planar cubic graph from an arbitrary 2-connected bipartite planar cubic graph.}
\label{fig:2conn_bip}
\end{figure}

The lower bound is based on the same construction as for 2-connected planar graphs (cf.\ Conjecture~\ref{sej3}), but here $H$ is chosen to be a cubic bipartite graph and instead of the edge-deleted $K_4$ we use an edge-deleted $K_{3,3}$ (see Figure~\ref{fig:2conn_bip}). Note that bipartite connected cubic graphs are also 2-connected. For planar cubic bipartite (and hence 2-connected)  graphs we can use the same construction with an edge-deleted cube (see Figure~\ref{fig:2conn_bip}), giving a lower bound of $\frac{n}{26}+1$.  Finally, planar cubic 3-connected bipartite graphs may not be of much interest here, since by the 
(still open)  Barnette-conjecture these are all hamiltonian.


\section{Proofs of the main results}
\label{sect:proof_main_results}

In this section we prove Theorems~\ref{fo} and~\ref{al}, which we here restate:

\mainthma*
\mainthmb*

In order to prove Theorem~\ref{fo} let us choose a minimal vertex disjoint path (vdp) cover $S$ of $G$. 
By the theorem of Yu~\cite{Yu}, $S$ has size at most $\frac{n}{10}$. A path $P$ is said to be \emph{long}, if it contains at least 18 vertices, otherwise it is \emph{short}. 
The following lemma is about subgraphs of $G$ induced by the vertex set of a short path. 

\begin{lemma} \label{short}
Let $G$ be a traceable graph of order at most 17 where all vertices have degree 2 or 3.
Suppose that the number of degree 2 vertices is at least 2 and for every cut vertex $v$ of $G$, each component  
of $G-v$ contains a degree 2 vertex in $G$. Then there exists a hamiltonian path of $G$ starting
at one of the degree 2 vertices.
\end{lemma}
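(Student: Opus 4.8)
The plan is to argue by contradiction: assume every Hamiltonian path of $G$ has both endpoints of degree $3$, and derive a contradiction from the degree-$2$ and cut-vertex hypotheses. The engine throughout will be P\'osa-style rotations. Recall that if $Q = q_1 q_2 \cdots q_n$ is a Hamiltonian path and $q_1$ is adjacent to $q_j$ with $j \ge 3$, then $q_{j-1} q_{j-2} \cdots q_1 q_j q_{j+1} \cdots q_n$ is again a Hamiltonian path with the same endpoint $q_n$ but new free endpoint $q_{j-1}$. Fixing one endpoint, this generates a set $S$ of reachable free endpoints, all of degree $3$ under our assumption. Before invoking rotations I would first dispose of the case in which $G$ is Hamiltonian: given a Hamiltonian cycle, deleting one of the two cycle edges at any degree-$2$ vertex (one exists since there are at least two) yields a Hamiltonian path ending at that vertex, a contradiction. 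Hence I may assume $G$ is non-Hamiltonian; in particular the two endpoints of any Hamiltonian path are non-adjacent.

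Next I would exploit the block structure. Since $G$ is traceable, removing a cut vertex $v$ leaves at most two components of $G-v$ (any vertex is incident to at most two path-edges), so the block--cut tree of $G$ is a path $B_1, c_1, B_2, \ldots, c_{k-1}, B_k$, and every Hamiltonian path traverses the blocks in this order, restricting to a Hamiltonian path of each internal block between its two cut vertices. Consequently, producing a Hamiltonian path of $G$ that starts at a degree-$2$ vertex reduces to producing, inside a terminal block $B_1$ (or $B_k$), a Hamiltonian path from a degree-$2$ vertex of $G$ to the exit vertex $c_1$; the remaining blocks are then filled in by the forced (and, since $G$ is traceable, existing) sub-paths. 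The cut-vertex hypothesis is exactly what makes this reduction usable: applied to $c_1$ it guarantees that the component $B_1 - c_1$ of $G - c_1$ contains a degree-$2$ vertex $z$ of $G$, and since $z \ne c_1$ all neighbours of $z$ lie in $B_1$, so $z$ has degree $2$ in $B_1$ as well. A short separate check handles bridge blocks $B_i = K_2$, which force their neighbours to carry the degree drop.

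This isolates the technical heart: in a $2$-connected subcubic block $B$ possessing a Hamiltonian path ending at a fixed vertex $c$ and a degree-$2$ vertex $z \ne c$, one must show that the rotation-reachable free-endpoint set $S$ (with $c$ held fixed) contains a degree-$2$ vertex. I would prove this by contradiction: if every vertex of $S$ had degree $3$, then by P\'osa's lemma the neighbourhood $N(S)$ is contained in $S$ together with a set $W$ of size at most $|S| - 1$, and $W$ separates $S$ from the remaining vertices of $B$. I would then play this separation against the $2$-connectivity of $B$ and the subcubic degree bound: a degree-$2$ vertex has no chord and is therefore ``rigid'' on every reachable path, so tracking its position relative to $W$ should force one of its two path-neighbours to become a reachable endpoint, contradicting that $S$ contains only degree-$3$ vertices. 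In the genuinely $2$-connected case of the whole graph $G$ one may rotate at \emph{both} ends, which only makes this easier.

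The main obstacle is precisely this last step --- forcing a degree-$2$ vertex into the rotation closure $S$ inside the $2$-connected block. The combinatorics of which rotations are available is delicate, and the bound $n \le 17$ suggests that a clean structural argument may have to be supplemented by a finite check of small, highly symmetric configurations (for instance near-Hamiltonian blocks where $S$ is very small). I would also verify carefully that the block reduction is consistent with the degree-$2$ count and with $K_2$ (bridge) blocks, and confirm that the cut-vertex hypothesis cannot be dropped: without it one can build a ``rigid'' rooted side, all of whose internal vertices have degree $3$, on which no rotation reaches a degree-$2$ endpoint.
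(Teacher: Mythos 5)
There is a genuine gap, and you partly acknowledge it yourself. The entire argument funnels into one claim: inside a $2$-connected subcubic block with a Hamiltonian path ending at $c$, the P\'osa rotation closure $S$ of free endpoints must contain a degree-$2$ vertex. You do not prove this; you say the separation given by P\'osa's lemma ``should force'' one of the path-neighbours of a degree-$2$ vertex into $S$, and then concede that this step is delicate and may need to be supplemented by a finite check. Everything before that point (disposing of the Hamiltonian case, the block--cut path structure, the reduction to a terminal block) is sound but routine; the lemma's content lives exactly in the step you leave open, so the proposal is not a proof.

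Moreover, the gap cannot be closed along the lines you sketch, for a structural reason: the lemma is sharp. The paper exhibits four graphs on $18$ vertices (Figure~\ref{fig:no_2_path}) that satisfy every hypothesis of the lemma --- traceable, all degrees $2$ or $3$, at least two degree-$2$ vertices, every component of every $G-v$ containing a degree-$2$ vertex --- yet have no Hamiltonian path starting at a degree-$2$ vertex. Your rotation argument nowhere uses the bound $n \le 17$ in an essential way; if it were valid as stated, it would prove the conclusion for all orders, contradicting these examples. So any correct proof must exploit the order bound substantively, not as a fallback ``finite check of small configurations.'' This is precisely what the paper does: its proof of Lemma~\ref{short} is an exhaustive computation, generating with \textit{geng} all graphs of order at most $17$ with degrees in $\{2,3\}$ and verifying the claimed property with two independently implemented programs. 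In that light, your plan is not an alternative proof route but an (unfinished) attempt at a structural theorem that is in fact false one vertex beyond the stated range.
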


\begin{proof}
We implemented two independent programs which test if a given graph has the properties from the lemma. We then used the generator \textit{geng}~\cite{nauty-website, mckay_14} to generate all graphs with minimum degree 2 and maximum degree at most 3 up to 17 vertices. Using our two independent programs we verified that among the generated graphs all traceable graphs with at least two degree 2 vertices and for which every cut vertex $v$ of $G$, each component  
of $G-v$ contains a degree 2 vertex, have a hamiltonian path starting at a degree 2 vertex and the results of both programs were in complete agreement.
\end{proof}

Note that this result is sharp in the sense that there are four graphs on 18 vertices which satisfy all the properties of the lemma above, but which do not have a hamiltonian path starting at a degree 2 vertex (see Figure~\ref{fig:no_2_path}). These four graphs can also be downloaded and inspected in the database of interesting graphs from the \textit{House of Graphs}~\cite{BCGM} by searching for the keywords ``no hamiltonian path starting at a degree 2 vertex''.

\begin{figure}
\tikzset{every node/.style={circle, fill, inner sep=1.5pt, thick},every edge/.style={draw, thick}}
\newcommand{\imagescale}{.6}
\begin{center}
\begin{tikzpicture}[scale=\imagescale]
    \useasboundingbox (-2,-3) rectangle (7,3);
    \node (0) at (3.00000000000000,0.000000000000000) {};
    \node (1) at (6.00000000000000,-1.73205080756888) {};
    \node (2) at (6.00000000000000,1.03923048454133) {};
    \node (3) at (-1.00000000000000,1.73205080756888) {};
    \node (4) at (1.40000000000000,-0.346410161513775) {};
    \node (5) at (1.40000000000000,0.346410161513775) {};
    \node (6) at (-1.00000000000000,-1.73205080756888) {};
    \node (7) at (3.60000000000000,0.346410161513775) {};
    \node (8) at (5.40000000000000,-1.38564064605510) {};
    \node (9) at (6.00000000000000,1.73205080756888) {};
    \node (10) at (-1.00000000000000,1.03923048454133) {};
    \node (11) at (-0.400000000000000,1.38564064605510) {};
    \node (12) at (-1.00000000000000,-1.03923048454133) {};
    \node (13) at (-0.400000000000000,-1.38564064605510) {};
    \node (14) at (2.00000000000000,0.000000000000000) {};
    \node (15) at (6.00000000000000,-1.03923048454133) {};
    \node (16) at (5.40000000000000,1.38564064605510) {};
    \node (17) at (3.60000000000000,-0.346410161513775) {};
    \path (0) edge (7);
    \path (0) edge (14);
    \path (0) edge (17);
    \path (1) edge (8);
    \path (1) edge (15);
    \path (2) edge (9);
    \path (2) edge (15);
    \path (2) edge (17);
    \path (3) edge (10);
    \path (3) edge (11);
    \path (4) edge (10);
    \path (4) edge (13);
    \path (4) edge (14);
    \path (5) edge (11);
    \path (5) edge (12);
    \path (5) edge (14);
    \path (6) edge (12);
    \path (6) edge (13);
    \path (7) edge (15);
    \path (7) edge (16);
    \path (8) edge (16);
    \path (8) edge (17);
    \path (9) edge (16);
    \path (10) edge (12);
    \path (11) edge (13);
\end{tikzpicture}\hfill
\begin{tikzpicture}[scale=\imagescale]
    \useasboundingbox (-3,-3) rectangle (6,3);
    \node (0) at (1.00000000000000,1.03923048454133) {};
    \node (1) at (0.400000000000000,-1.38564064605510) {};
    \node (2) at (2.00000000000000,-1.73205080756888) {};
    \node (3) at (2.00000000000000,1.73205080756888) {};
    \node (4) at (4.40000000000000,0.346410161513775) {};
    \node (5) at (-2.00000000000000,0.000000000000000) {};
    \node (6) at (1.00000000000000,-1.03923048454133) {};
    \node (7) at (0.400000000000000,1.38564064605510) {};
    \node (8) at (2.60000000000000,-1.38564064605510) {};
    \node (9) at (2.00000000000000,1.03923048454133) {};
    \node (10) at (5.00000000000000,0.000000000000000) {};
    \node (11) at (-1.40000000000000,-0.346410161513775) {};
    \node (12) at (1.00000000000000,-1.73205080756888) {};
    \node (13) at (-1.40000000000000,0.346410161513775) {};
    \node (14) at (1.00000000000000,1.73205080756888) {};
    \node (15) at (2.60000000000000,1.38564064605510) {};
    \node (16) at (4.40000000000000,-0.346410161513775) {};
    \node (17) at (2.00000000000000,-1.03923048454133) {};
    \path (0) edge (6);
    \path (0) edge (11);
    \path (0) edge (14);
    \path (1) edge (7);
    \path (1) edge (11);
    \path (1) edge (12);
    \path (2) edge (8);
    \path (2) edge (12);
    \path (2) edge (17);
    \path (3) edge (9);
    \path (3) edge (14);
    \path (3) edge (15);
    \path (4) edge (10);
    \path (4) edge (15);
    \path (4) edge (17);
    \path (5) edge (11);
    \path (5) edge (13);
    \path (6) edge (12);
    \path (6) edge (13);
    \path (7) edge (13);
    \path (7) edge (14);
    \path (8) edge (15);
    \path (8) edge (16);
    \path (9) edge (16);
    \path (9) edge (17);
    \path (10) edge (16);
\end{tikzpicture}

\begin{tikzpicture}[scale=\imagescale]
    \useasboundingbox (-4.5,-5.75) rectangle (4.5,2.25);
    \node (0) at (-1.90211303259031,0.618033988749895) {};
    \node (1) at (0.000000000000000,1.00000000000000) {};
    \node (2) at (1.90211303259031,0.618033988749895) {};
    \node (3) at (0.000000000000000,-5.50000000000000) {};
    \node (4) at (0.707106781186548,-3.29289321881345) {};
    \node (5) at (-1.50000000000000,-4.00000000000000) {};
    \node (6) at (-0.951056516295154,0.309016994374947) {};
    \node (7) at (-0.587785252292473,-0.809016994374947) {};
    \node (8) at (1.17557050458495,-1.61803398874989) {};
    \node (9) at (0.707106781186548,-4.70710678118655) {};
    \node (10) at (0.000000000000000,-2.50000000000000) {};
    \node (11) at (0.000000000000000,2.00000000000000) {};
    \node (12) at (0.951056516295154,0.309016994374947) {};
    \node (13) at (0.587785252292473,-0.809016994374947) {};
    \node (14) at (-1.17557050458495,-1.61803398874989) {};
    \node (15) at (-0.707106781186548,-4.70710678118655) {};
    \node (16) at (-0.707106781186548,-3.29289321881345) {};
    \node (17) at (1.50000000000000,-4.00000000000000) {};
    \path (0) edge (6);
    \path (0) edge (11);
    \path (0) edge (14);
    \path (1) edge (7);
    \path (1) edge (11);
    \path (1) edge (13);
    \path (2) edge (8);
    \path (2) edge (11);
    \path (2) edge (12);
    \path (3) edge (9);
    \path (3) edge (15);
    \path (4) edge (10);
    \path (4) edge (15);
    \path (4) edge (17);
    \path (5) edge (14);
    \path (5) edge (15);
    \path (5) edge (16);
    \path (6) edge (12);
    \path (6) edge (13);
    \path (7) edge (12);
    \path (7) edge (14);
    \path (8) edge (13);
    \path (8) edge (17);
    \path (9) edge (16);
    \path (9) edge (17);
    \path (10) edge (16);
\end{tikzpicture}\hfill
\begin{tikzpicture}[scale=\imagescale]
    \useasboundingbox (-4.5,-4) rectangle (4.5,4);
    \node (0) at (0.000000000000000,2.00000000000000) {};
    \node (1) at (2.50000000000000,-2.50000000000000) {};
    \node (2) at (0.000000000000000,-1.50000000000000) {};
    \node (3) at (0.707106781186548,0.707106781186548) {};
    \node (4) at (-2.50000000000000,-2.50000000000000) {};
    \node (5) at (-1.50000000000000,0.000000000000000) {};
    \node (6) at (0.000000000000000,3.00000000000000) {};
    \node (7) at (2.50000000000000,0.000000000000000) {};
    \node (8) at (0.707106781186548,-0.707106781186548) {};
    \node (9) at (0.000000000000000,1.50000000000000) {};
    \node (10) at (-2.50000000000000,2.50000000000000) {};
    \node (11) at (0.000000000000000,-2.00000000000000) {};
    \node (12) at (0.000000000000000,-3.00000000000000) {};
    \node (13) at (2.50000000000000,2.50000000000000) {};
    \node (14) at (-2.50000000000000,0.000000000000000) {};
    \node (15) at (-0.707106781186548,-0.707106781186548) {};
    \node (16) at (-0.707106781186548,0.707106781186548) {};
    \node (17) at (1.50000000000000,0.000000000000000) {};
    \path (0) edge (6);
    \path (0) edge (10);
    \path (0) edge (13);
    \path (1) edge (7);
    \path (1) edge (11);
    \path (1) edge (12);
    \path (2) edge (8);
    \path (2) edge (15);
    \path (3) edge (9);
    \path (3) edge (15);
    \path (3) edge (17);
    \path (4) edge (11);
    \path (4) edge (12);
    \path (4) edge (14);
    \path (5) edge (14);
    \path (5) edge (15);
    \path (5) edge (16);
    \path (6) edge (10);
    \path (6) edge (13);
    \path (7) edge (13);
    \path (7) edge (17);
    \path (8) edge (16);
    \path (8) edge (17);
    \path (9) edge (16);
    \path (10) edge (14);
    \path (11) edge (12);
\end{tikzpicture}
\end{center}
\caption{The four traceable graphs of order 18 where all vertices have degree 2 or 3, and for each cut all components contain at least one of the vertices of degree 2, but which have no hamiltonian path starting at one of the vertices of degree 2.}\label{fig:no_2_path}
\end{figure}
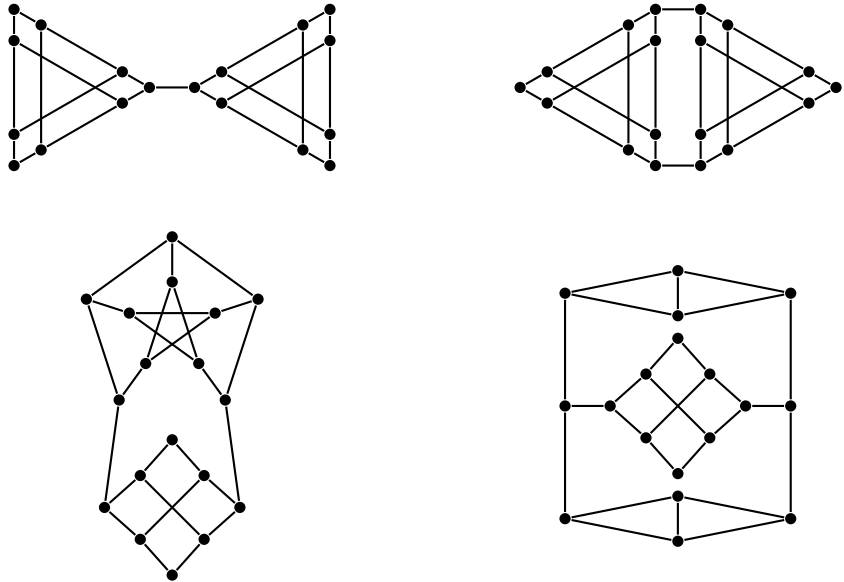

We would like to extend the path cover $S$ to a spanning tree by adding edges between different components of $S$. If we do this  arbitrarily then we obtain a spanning tree in which the number of leaves is at most twice the number of the paths in $S$, that is at most $\frac{n}5 $, which is much worse than what we need.
On the other hand, we claim that by Lemma~\ref{short} a short path $P$ (more precisely a path $P'$ on the vertex set of $P$) can be joined to some other path at the endvertex of $P'$. In order to show this, let us consider the graph $H=G[V(P)]$, that is the subgraph of $G$ spanned by the short path $P$. If any of the endvertices of the path $P$ has degree 1 or 2 in $H$, then $P$ obviously can be joined to some other path at an endvertex. If this is not the case, then let us observe that $H$ fulfills all conditions of Lemma \ref{short}. Indeed: $H$ has vertices of degree 2 and 3 only and the number of degree 2 vertices is at least 2, by the
2-connectivity of $G$. Finally, for every cut vertex $v$ of $H$, each component
of $H-v$ contains a degree 2 vertex, otherwise $v$ would be a cut-vertex of $G$, contradicting the 2-connectivity of $G$, again. Thus by Lemma~\ref{short}, $H$ has a hamiltonian path $P'$ starting at one of the degree 2 vertices $t$, hence $P'$ can be joined to some other path of the vdp cover at $t$.

\smallskip

If we can build a spanning tree, such that all short paths are joined this way, we obtain a spanning tree with at most $s+2\ell$ leaves, where $s$ and $\ell$ are the number of short and long paths, respectively.

First let us check that this would indeed give the desired bound. We know that 
\[s+\ell \leq \frac{n}{10} \; \; \; \; \; \; \; \; \; \hbox{and} \; \; \; \; \; \; \; \; \; s+18\ell \leq n,\] 
where the second inequality is true because short paths contain at least 1, long paths contain at least 18 vertices. Then 
\[ \frac{16}{17} (s+\ell ) \leq \frac{16}{17} \cdot \frac{n}{10} \; \; \; \; \; \; \; \; \; \hbox{and} \; \; \; \; \; \; \; \; \; \frac{1}{17} (s+18\ell) \leq  \frac{n}{17}.\] Taking the sum of these two inequalities we obtain \[ s+2\ell \leq \frac{16+10}{17\cdot 10}n = \frac{13}{85} n.\] 

Next let us observe that a spanning tree with $s+2\ell$ leaves is not so easy to build. Though we may join the short paths to some other path at an endvertex, there is no guarantee that after joining a short path $P_1$ to another short path $P_2$, we still have the chance to also join $P_2$ to some path different from $P_1$. The following lemma is used to handle this problem. The number of vertices of a path $P$ is denoted by $|P|$.

\begin{lemma} 
Let $S$ be a mimimal vdp cover of a graph $G$, such that 
$\sum_{P\in S} |P|^2$ is maximum. 
Let furthermore $P, Q \in S$, such that $|P| \leq |Q|$. Then $Q$ cannot be joined to $P$ at an endvertex of $Q$. 
\end{lemma}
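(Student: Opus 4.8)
The plan is to argue by contradiction, exploiting both halves of the extremal choice of $S$: that it uses the minimum number of paths and that, among such covers, it maximizes $\sum_{P\in S}|P|^2$. Suppose some endvertex $q$ of $Q$ is adjacent in $G$ to a vertex of $P$. Write $P=p_1\cdots p_k$ and $Q=q_1\cdots q_m$ with $q_1=q$, and let $p_i$ be the vertex of $P$ adjacent to $q_1$; the hypothesis $|P|\le|Q|$ gives $k\le m$. I would split into two cases according to whether $p_i$ is an endvertex or an interior vertex of $P$.

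If $p_i$ is an endvertex of $P$ (so $i\in\{1,k\}$), then the edge $q_1p_i$ concatenates the two distinct paths $Q$ and $P$ into a single path (traverse $Q$ from $q_m$ to $q_1$, then $P$ from $p_i$ to its other end). This yields a vdp cover with one fewer path than $S$, contradicting minimality; this case needs nothing beyond the minimality of $S$.

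The substantive case is when $p_i$ is interior, i.e.\ $2\le i\le k-1$. Here I would not merge but \emph{reroute}: delete the edge $p_ip_{i+1}$ and add the edge $q_1p_i$, replacing $P$ and $Q$ by the two paths $q_m\cdots q_1p_ip_{i-1}\cdots p_1$ (on $m+i$ vertices) and $p_{i+1}\cdots p_k$ (on $k-i$ vertices). These are vertex-disjoint subpaths of $P$ joined to $Q$, covering exactly the same vertex set, and their number equals that of $S$, so the new collection is still a minimum-size cover. The point is that its sum of squared lengths is strictly larger, since the net change is
\[\bigl[(m+i)^2+(k-i)^2\bigr]-\bigl[m^2+k^2\bigr]=2i(m+i-k),\]
which is positive because $i\ge 1$ and $m\ge k$. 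This contradicts the maximality of $\sum_{P\in S}|P|^2$. (Rerouting through $p_{i-1}$ instead works equally well, with change $2(k-i+1)(m-i+1)>0$, so either direction suffices.)

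The main obstacle is the interior case, and the crux is the realization that one should \emph{not} try to force a reduction in the number of paths there, but instead keep the path count fixed and increase the secondary objective $\sum|P|^2$. The hypothesis $|P|\le|Q|$ is precisely what makes the relevant quadratic difference positive, since $m\ge k$ guarantees $m+i-k\ge i>0$. Checking that the rerouted family is genuinely a vertex-disjoint path cover is routine, so the only real content is setting up the right local modification and the short sign computation.
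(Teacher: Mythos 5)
Your proof is correct and follows essentially the same exchange argument as the paper: transfer the segment of $P$ up to the attachment vertex onto $Q$, and observe that the sum of squared lengths strictly increases because $|Q|\geq |P|$, contradicting the extremal choice of $S$. The only cosmetic difference is that you split off the case where the attachment vertex is an endvertex of $P$ (contradicting minimality), while the paper absorbs it into the single computation $|Q'|^2+|P'|^2=q^2+p^2+2k^2+2k(q-p)>q^2+p^2$ with $|Q'|=q+k$, $|P'|=p-k$, $k\geq 1$.
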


\begin{proof} Let $p=|P|, q=|Q|$.  
If $Q$ can be joined to $P$ at an endvertex of $Q$ we easily obtain two paths $Q'$ and $P'$ covering the same vertices as $Q$ and $P$, such that for some integer $k \geq 1$ we have $|Q'| = q+k$ and $|P'| = p-k$. Then \[|Q'|^2 + |P'|^2 = q^2+p^2 + 2k^2 + 2k(q-p) > q^2 + p^2,\]  
that is, by substituting $P$ and $Q$ in $S$ by $P'$ and $Q'$, we obtain a minimal vdp cover contradicting the choice of $S$. 
\end{proof}
 
By the lemma, a short path can only be joined to a longer path, thus we can obtain the desired spanning tree by choosing a minimal vdp cover $S$, such that $\sum_{P\in S} |P|^2$ is maximum, then by 
joining the short paths one by one in the increasing order of their lengths (and adding some arbitrary edges between components when all the short paths are handled). Therefore the bound $\frac{13}{85} n$ of Theorem~\ref{fo} can be achieved, indeed.


It is worth mentioning a consequence of the above proof.

\begin{thm} \label{utso}
If $G$ is 2-connected cubic graphs of order $n$ then  $\ml (G) = 2\mu (G) $ is possible only if $\mu (G) \leq \frac{n}{18} $.
\end{thm}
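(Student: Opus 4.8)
The plan is to read off the statement directly from the spanning-tree construction already carried out in the proof of Theorem~\ref{fo}. First I would fix a minimal vertex disjoint path cover $S$ of $G$ with $\sum_{P \in S}|P|^2$ maximum, exactly as in that proof, and split $S$ into its $s$ \emph{short} paths (fewer than $18$ vertices) and its $\ell$ \emph{long} paths (at least $18$ vertices). Since $S$ is minimal, $|S| = \mu(G) = s + \ell$.

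The proof of Theorem~\ref{fo} shows that this particular cover extends to a spanning tree with at most $s + 2\ell$ leaves, by joining every short path to a strictly longer path at one of its endvertices, so that each short path contributes a single leaf rather than two. Hence $\ml(G) \le s + 2\ell$. Now I would simply impose the hypothesis $\ml(G) = 2\mu(G)$. Combining it with the bound $\ml(G) \le s + 2\ell$ and with $\mu(G) = s + \ell$ gives
\[ 2(s + \ell) = 2\mu(G) = \ml(G) \le s + 2\ell, \]
which rearranges to $s \le 0$, forcing $s = 0$.

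Therefore, under the equality assumption, the cover $S$ consists entirely of long paths, each on at least $18$ vertices. Counting vertices yields $n = \sum_{P \in S}|P| \ge 18\ell = 18\mu(G)$, and hence $\mu(G) \le \frac{n}{18}$, as claimed.

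I do not expect a genuine obstacle, since the argument is essentially a corollary of the leaf-counting bound $\ml(G) \le s + 2\ell$ established for Theorem~\ref{fo}. The one point deserving care is the bookkeeping of leaves under the joining operation: one must be certain that joining a short path to a longer one removes exactly one leaf (the joined endvertex) and creates no new leaves elsewhere, and that the cover realizing $\ml(G) \le s + 2\ell$ is the same minimal cover realizing $\mu(G) = s + \ell$. Both are guaranteed by the construction preceding the statement of Theorem~\ref{utso}, so no additional work is needed.
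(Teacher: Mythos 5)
Your proof is correct and is essentially the paper's own argument: the paper likewise observes that, by the construction in the proof of Theorem~\ref{fo}, a minimal vdp cover containing at least one short path already yields $\ml(G) < 2\mu(G)$, so the equality $\ml(G) = 2\mu(G)$ forces every path of that cover to be long and hence $\mu(G) \leq \frac{n}{18}$. Your write-up only adds the explicit bookkeeping $2(s+\ell) = 2\mu(G) = \ml(G) \leq s+2\ell$, which is the same contrapositive step spelled out arithmetically.
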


\begin{proof}
If $G$ has a vdp cover containing at least one short path, then by the above proof, clearly $\ml (G) < 2\mu (G) $. Thus any vdp cover of $G$ consists of only long paths, from which $\mu (G) \leq \frac{n}{18} $ follows. 
\end{proof}

Now we give a proof of Theorem~\ref{al}. For this, we use the so-called Refined Depth First Search (RDFS), defined in~\cite{sala}. In that paper it is proved (in Theorem 8) that RDFS finds a spanning tree with at least $\frac56 n-\frac43$ non-leaves, that is $\ml (G)  \leq \frac{n}6 + \frac43$. Now we give a slightly improved analysis of RDFS showing that it actually finds a spanning tree with at least $\frac56 n-\frac13$ non-leaves, provided we start the RDFS at a cut-vertex. Actually, in order to do this, we only have to change the last paragraph of the proof of Theorem 8. of~\cite{sala}. If the root $r$ of the RDFS-tree $T$ is a cut-vertex, it has degree at least 2 in $T$, and therefore $r$ can only appear once in the multiset $M=\{ c(l), c'(l), g(l), h(l) :  \hbox{$l$ is a d-leaf of $T$} \}$ and if $r$  appears once, then it also has degree 2 in $T$. Moreover, d-leaves and leaves of $T$ are the same, since $r$ has degree at least 2. Thus $T$ has at least $4v_1$ distinct vertices of degree 2, where $v_i$ denotes the number of vertices of degree $i$ in $T$.

Now the desired bound easily follows for cubic graphs $G$ of connectivity 1: let us just start the RDFS at a cut-vertex, then for the RDFS-tree $T$ obtained $n=v_1+v_2+v_3\geq v_1+4v_1+(v_1-2) = 6v_1-2$, that is $T$ has at most $\frac{n}6 + \frac13$ leaves. If $G$ has connectivity at least 2, then by Theorem~\ref{fo} $G$ has a spanning tree with at most $\frac{13}{85} n$ leaves, and the proof of Theorem~\ref{al} is now complete.

\section*{Acknowledgements}
Most of the computations were carried out using the Stevin Supercomputer Infrastructure at Ghent University.


\end{document}